\newtheorem{Thm}{Theorem}[section]
\newtheorem{Lem}[Thm]{Lemma}
\newtheorem{Cor}[Thm]{Corollary}
\newcommand{\Oh}{\mathnormal{O}}
\newcommand{\fr}{f_\mathrm{rings}}
\begin{document}

\title{The Enumeration of Finite Rings}
\author{Simon R. Blackburn\\
Department of Mathematics\\
Royal Holloway University of London\\
Egham, Surrey TW20 0EZ, United Kingdom\\
\texttt{s.blackburn@rhul.ac.uk}\\
\and
K.~Robin McLean \\ 
Department of Mathematical Sciences\\
University of Liverpool\\
Liverpool L69 7ZL, United Kingdom
\\
\texttt{krmclean@liv.ac.uk}}
\maketitle

\begin{abstract}
Let $p$ be a fixed prime. We show that the number of isomorphism classes of finite rings of order $p^n$ is $p^\alpha$, where $\alpha=\frac{4}{27}n^3+\Oh(n^{5/2})$. This result was stated (with a weaker error term) by Kruse and Price in 1969; a problem with their proof was pointed out by Knopfmacher in 1973. We also show that the number of isomorphism classes of finite commutative rings of order $p^n$ is $p^\beta$, where $\beta=\frac{2}{27}n^3+\Oh(n^{5/2})$. This result was stated (again with a weaker error term) by Poonen in 2008, with a proof that relies on the problematic step in Kruse and Price's argument. 
\end{abstract}

\paragraph{MSC2020 classification:} 16P10, 13M05, 05A16.

\section{Introduction}

For a positive integer $N$, let $\fr(N)$ be the number of (isomorphism classes of) finite rings of cardinality $N$. (We do not assume that our rings have a multiplicative identity.) What can be said about this function? 

Write $N=\prod_{i=1}^t p_i^{n_i}$, where the integers $p_i$ are distinct primes. It is not hard to see that a ring $R$ of cardinality $N$ may be uniquely written as a direct sum
$R=\bigoplus_{i=1}^t R_i$, where $R_i$  is a ring of order $p_i^{n_i}$,
and hence
$\fr(N)=\prod_{i=1}^t\fr(p_i^{n_i})$.
So we may specialise to the case when $N$ is a prime power. For the rest of this paper, we assume that $N=p^n$, where $p$ is prime. 

It seems very hard to provide exact values for $\fr(p^{n})$, so it is natural to ask about asymptotic enumeration. We think of $p$ as being fixed, and~$n$ increasing: How fast does this function grow? About fifty years ago, an interesting paper by Kruse and Price \cite{K & P} addressed this question, taking inspiration from the then-recent enumeration of finite $p$-groups by Graham Higman~\cite{Higman} and C.C.~Sims~\cite{Sims}. Kruse and Price~\cite[Corollary~5.9]{K & P} (see also \cite[Chapter~V]{K & P book}) state that the number of (isomorphism classes of) finite rings of cardinality $p^{n}$ is $p^{\alpha}$, where $\alpha = \frac{4}{27}n^{3} + \Oh(n^{8/3})$. The structure of their argument can be summarised as follows. For the lower bound, they construct $p^{\frac{4}{27}n^{3} -\Oh(n^2)}$ rings of cardinality $N$ by taking quotients of a certain $\mathbb{F}_p$-algebra $A$ of nilpotency class $2$ on $r$ generators, where $r\approx 2n/3$. (Here $\mathbb{F}_p$ is the finite field of order~$p$.) It is possible to show that $A$ has about $p^{\frac{4}{27}n^3}$ ideals of index~$p^n$, and it can be shown that each isomorphism class of rings appears at most~$p^{\Oh(n^2)}$ times as a quotient by an ideal of this form. This approach is inspired by Higman's lower bound for the number of isomorphism classes of groups of order $p^n$. To provide a corresponding upper bound, Kruse and Price divide the problem into three steps:
\begin{enumerate}
\item Reduce to the case of $\mathbb{F}_p$-algebras.
\item Provide an upper bound on the number of nilpotent $\mathbb{F}_p$-algebras, using techniques inspired by Sims' enumeration of (nilpotent) groups of order~$p^n$.
\item Show that once the isomorphism class of the (necessarily nilpotent) Jacobson radical $J(R)$ is fixed, there are few possibilities for the algebra $R$ itself. There is an interesting parallel with Pyber's (much later) results on group enumeration~\cite{Pyber} here: Pyber shows that the number of isomorphism classes of groups whose Sylow subgroups have been chosen (up to isomorphism) is small. 
\end{enumerate}

The first step of this approach is ingenious but, sadly, flawed: In 1973, Knopfmacher~\cite[Page~169]{Knopfmacher} already points out that the reduction does not work. We provide counterexamples to this reduction in the appendix below.

The main aim of our paper is to show how the upper bound in~\cite{K & P} may nevertheless be established, by providing analogues of the last two steps above that work without the initial reduction to algebras. First, in Section~\ref{Nilpotent case}, we provide an upper bound on the number of nilpotent rings, rather than just nilpotent $\mathbb{F}_p$-algebras, of cardinality $p^n$. This uses techniques similar to those in Sims~\cite{Sims} and in Kruse and Price~\cite{K & P} (but see below). Secondly, in Section~\ref{General case}, we show that once the isomorphism class of the (nilpotent) Jacobson radical $J(R)$ is fixed, there are few possibilities for the ring $R$. Our argument is (and needs to be) rather different from the argument in~\cite{K & P}.

In fact, our arguments in Section~\ref{Nilpotent case} make use of a trick due to Craig Seeley and M.F.~Newman~\cite{SeeleyNewman} (see Blackburn, Neumann and Venkataraman~\cite[Chapter~5]{BlackburnNeumann} for details) which was originally applied to improve error terms in group enumeration. Adapting this trick to the situation of rings, we are able to show (Theorem~\ref{general bound}) that the number of rings of cardinality $p^{n}$ is $p^{\alpha}$, where
\[
\alpha = \tfrac{4}{27}n^{3} + \Oh(n^{5/2}).
\]
We note (see Theorem~\ref{ring with one bound}) that the same statement holds when enumerating rings with identity.

More recently, Poonen~\cite[Section~11]{Poonen} has used the same problematic initial reduction to algebras~\cite[Lemma~11.1]{Poonen} in order to enumerate commutative rings. In Section~\ref{sec:commutative}, we provide a proof of this enumeration also, with an improved error term. We show (Theorem~\ref{commutative main}) that the number of commutative rings of cardinality $p^n$ is $p^\beta$, where
\[
\beta= \tfrac{2}{27}n^{3} + \Oh(n^{5/2}).
\]
The reduction to nilpotent rings in Section~\ref{General case} can be used unchanged in the commutative situation. However, the enumeration of nilpotent commutative rings requires extra work. As before, our results allow us to enumerate commutative rings with identity; see Theorem~\ref{commutative ring with one bound}. 

Finally, in the appendix, we exhibit counterexamples to the proof of Kruse and Price.

\paragraph{Dedication} We dedicate this paper to the memory of Peter M. Neumann. Peter was responsible for initiating the authors' collaboration on this paper, bringing us together after Robin wrote to him with counterexamples to Kruse and Price's argument and early ideas for avoiding the problematic reduction to $\mathbb{F}_p$-algebras. Simon was one of Peter's many D.Phil.\  students, and he would like to acknowledge the lasting influence of Peter's knowledge, advice and encouragement throughout his career. He will be greatly missed.

\section{Finite nilpotent rings}
\label{Nilpotent case}

Let $R$ be a nilpotent ring of order $p^n$, and define $\overline{R}=R/{pR}$. Roughly speaking (using the insight of Sims) either $\overline{R}/\overline{R}^3$ has very restricted structure (so there are few possibilities for it), or there exists a small set of elements of $R$ whose images in $\overline{R}$ generate a sub-algebra containing $\overline{R}^2$. The multiplicative structure of this small set determines most of the structure of $R$, allowing us to provide a tight enumeration of nilpotent rings. The arguments in this section are similar to~\cite{K & P}, although we use an idea of Seeley and Newman to improve the error term in our enumeration. 

We begin with two structural results (Lemmas~\ref{cube zero} and~\ref{alpha}) concerning $\mathbb{F}_p$-algebras $R$, before proving the main theorem of this section (Theorem~\ref{main nilpotent}). Define the \textit{Sims dimension} of the $\mathbb{F}_p$-algebra $R$ to be the least dimension of $S/R^{2}$ as $S$ ranges over the subalgebras $S$ of $R$ such that $S^{2} = R^{2}$. The following result is Lemma 5.5 of \cite {K & P}.

\begin{Lem} \label{cube zero}
Let $R$ be a nilpotent $\mathbb{F}_{p}$-algebra such that $r = \dim(R/R^{2})$, $s$ is the Sims dimension of $R$, $t = \dim(R^{2})$ and $R^{3} = 0$. If $S$ is a subalgebra of $R$ that contains $R^{2}$, then
\begin{displaymath}
\dim(S^{2}) - \dim(S/R^{2}) \leq t - s + 1.
\end{displaymath}
\end{Lem}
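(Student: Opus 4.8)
The plan is to translate the statement into a question about a single bilinear map and then solve it by a greedy extension argument. Since $R^3=0$, the product on $R$ annihilates $R^2$ on both sides, so multiplication descends to a bilinear map $\mu\colon V\times V\to W$, where $V=R/R^2$ and $W=R^2$; moreover $\mu(V,V)=W$ since $R^2$ is spanned by products. Subalgebras $S$ with $R^2\subseteq S$ correspond exactly to subspaces $U=S/R^2\subseteq V$ (any such $S$ is automatically a subalgebra, as $S^2\subseteq R^2\subseteq S$), and under this correspondence $\dim(S/R^2)=\dim U$ and $S^2=\mu(U,U)$, while the Sims dimension $s$ is the least dimension of a subspace $U$ with $\mu(U,U)=W$. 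The claim thus becomes: for every $U\subseteq V$,
\[
\dim\mu(U,U)-\dim U\le t-s+1.
\]
Writing $c=t-\dim\mu(U,U)=\dim\bigl(W/\mu(U,U)\bigr)$, this is equivalent to $\dim U\ge s-c-1$, and I would establish it by extending $U$ to a subspace $\hat U\supseteq U$ with $\mu(\hat U,\hat U)=W$ while adjoining at most $c+1$ dimensions: then $\dim\hat U\ge s$ by definition of the Sims dimension gives $\dim U\ge\dim\hat U-(c+1)\ge s-c-1$.

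The extension is carried out in two phases. In the \emph{greedy phase}, while some $v\in V$ satisfies $\mu(U'+\langle v\rangle,U'+\langle v\rangle)\supsetneq\mu(U',U')$, I adjoin such a $v$; each step adds one dimension to $U'$ and strictly decreases the codimension of $\mu(U',U')$ in $W$, so after $k\le c-c'$ steps it terminates at some $U'$ with $\mu(U',U')=W'$ of codimension $c'$. Termination means that for every $v$ we have $\mu(U',v),\mu(v,U')\subseteq W'$ and $\mu(v,v)\in W'$; hence the induced map $\bar\mu\colon V\times V\to W/W'$ is \emph{alternating}, spans $W/W'$ (dimension $c'$), and vanishes whenever one argument lies in $U'$. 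The key step is then a lemma on alternating maps: for a spanning alternating bilinear map onto a space of dimension $c'$, there is a subspace $P$ with $\dim P\le c'+1$ and $\bar\mu(P,P)=W/W'$. Granting this, $\hat U=U'+P$ satisfies $\mu(\hat U,\hat U)\supseteq W'$ and, modulo $W'$, surjects onto $W/W'$ (since $\bar\mu$ kills $U'$), so $\mu(\hat U,\hat U)=W$; the total number of adjoined dimensions is at most $(c-c')+(c'+1)=c+1$, as required.

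I would prove the alternating lemma by taking a subspace $P$ of \emph{least} dimension with $\bar\mu(P,P)=W/W'$ and showing that a basis of $P$ can be ordered $x_1,\dots,x_m$ so that the subspaces $\bar\mu\bigl(\langle x_1,\dots,x_k\rangle,\langle x_1,\dots,x_k\rangle\bigr)$ strictly increase for $k\ge2$. Since $\bar\mu(\langle x_1\rangle,\langle x_1\rangle)=0$ by alternation, such a chain passes through $m-1$ distinct values before reaching $W/W'$, forcing $c'\ge m-1$, i.e.\ $m\le c'+1$. Minimality of $P$ is what drives the ordering: no basis vector is globally redundant, and one argues that whenever the running product space is still proper, some remaining basis vector can be placed next so as to enlarge it.

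The main obstacle is precisely this sharp bound $c'+1$, as opposed to the naive $2c'$ obtained by adjoining fresh pairs of vectors. The difficulty is that, for an alternating map, a single new vector may fail to enlarge the product space at all, so one must show that for a minimal $P$ the basis assembles into a single \emph{chain} in which consecutive vectors pair nontrivially, each vector being reused across two consecutive products. This bridging is what stops one from paying an extra $+1$ for every separate alternating "block", and it is exactly the phenomenon responsible for the lone $+1$ appearing in the statement.
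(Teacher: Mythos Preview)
The paper does not prove this lemma at all; it simply cites Kruse and Price~\cite[Lemma~5.5]{K & P}. Your proposal is therefore an attempt at a self-contained proof, and the first half is sound: the translation to a bilinear map $\mu\colon V\times V\to W$, the reformulation as ``extend $U$ to a full-product subspace using at most $c+1$ new vectors'', and the greedy phase are all correct. In particular, your observation that once the greedy phase stalls the residual map $\bar\mu$ becomes \emph{alternating} and kills $U'$ is the right reduction.

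The gap is in the alternating lemma itself. You claim that for a minimal $P$ with $\bar\mu(P,P)=W/W'$, a basis can be ordered so that the product spaces increase strictly from the second step on, because ``some remaining basis vector can be placed next so as to enlarge it''. But this greedy step is not justified by minimality alone. Suppose you have placed $p_1,\dots,p_k$ with running product $X_k\subsetneq X$, and let $Q$ be a complement of $P_k$ in $P$. It can happen that $\bar\mu(P_k,P)\subseteq X_k$ while $\bar\mu(Q,Q)\not\subseteq X_k$; then \emph{no} vector of $P$ enlarges $X_k$ when adjoined to $P_k$. Minimality of $P$ does force both $P_k$ and $Q$ to be minimal for their respective targets $X_k$ and $X/X_k$, but feeding the inductive bound into each piece only gives
\[
m\;\le\;(\dim X_k+1)+(\dim(X/X_k)+1)\;=\;c'+2,
\]
which is off by one. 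Recovering the lost $+1$ is exactly the ``bridging'' you describe informally: one must replace the last placed vector $p_k$ by $p_k+q$ for some carefully chosen $q\in Q$ so that the chain continues across the join, and then verify that this replacement does not destroy the strict increase already achieved at step $k$. That verification is delicate (scalar adjustments are limited in characteristic~$2$), and it is precisely the argument your sketch omits. Your diagnosis of where the difficulty lies is accurate, but the bridge is asserted rather than built.
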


\begin{Lem} \label{alpha}
Let $r, s, t$ be positive integers and let $\alpha(r, s, t)$ be a real number. Suppose that $p^{\alpha(r, s, t)}$ is the number of (isomorphism classes of) $\mathbb{F}_{p}$-algebras $R$ such that 
$r = \dim(R/R^{2})$, $s$ is the Sims dimension of $R$, $t = \dim(R^{2})$ and $R^{3} = 0$, then 
\begin{eqnarray}
\alpha(r, s, t) & \leq & r^{2}(t - s) + \mathnormal{O}((r + t)^{8/3})  \label{old_inequal} \\
 \textit{and} \hspace{5 mm} \alpha(r, s, t) & \leq & r^{2}(t - s) + \tfrac{1}{2}rst + \mathnormal{O}((r + t)^{5/2}). \label{new_inequal}
\end{eqnarray}
\end{Lem}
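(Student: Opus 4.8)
The plan is to translate the classification of such algebras into a problem about bilinear maps and then to count structure constants block by block, using the Sims dimension to control the dominant block. Since $R^3=0$, every triple product vanishes, so multiplication annihilates $R^2$ on both sides and therefore factors through a bilinear map $\beta\colon V\times V\to W$, where $V=R/R^2$ has dimension $r$ and $W=R^2$ has dimension $t$; the condition $\dim R^2=t$ forces the image of $\beta$ to span $W$. Conversely any such $\beta$ defines an algebra that is automatically associative (both $(xy)z$ and $x(yz)$ lie in $R^3=0$), and two maps give isomorphic algebras precisely when they lie in one orbit of $GL(V)\times GL(W)$ acting by change of basis on $V$ and $W$; the unipotent part $\mathrm{Hom}(V,W)$ of the automorphism group acts trivially on $\beta$, since altering a splitting changes products only by terms in $R^3=0$. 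Counting all maps $\beta$ already gives the crude bound $\alpha(r,s,t)\le r^2t$, so the whole content of the lemma is to improve the leading coefficient from $t$ to $t-s$ and to control the error.

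To do this I would fix a subspace $U\le V$ of dimension $s$ witnessing the Sims dimension, so that $\beta(U,U)$ spans $W$, write $V=U\oplus C$ with $\dim C=r-s$, and choose a basis $e_1,\dots,e_r$ adapted to this splitting and, within $U$, adapted to the filtration $S_k=\langle e_1,\dots,e_k\rangle+R^2$. The structure constants $\beta(e_i,e_j)$ then split into four blocks: the block $U\times U$ has $s^2$ products, contributing $O(s^2t)=O(rst)$; the two cross blocks $U\times C$ and $C\times U$ have $2s(r-s)$ products, contributing $O(rst)$; and the block $C\times C$, with $(r-s)^2$ products valued in $W$, is dominant. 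The aim is to show that, after normalising by the available change of basis, the $C\times C$ block contributes about $r^2(t-s)$ rather than the naive $r^2t$. Here I would adjoin the generators lying in $C$ one at a time and use Lemma~\ref{cube zero}, in the form $\dim S_k^2-k\le t-s+1$, to bound at each step the number of new structure constants that cannot be removed by a change of basis. Bounding the remaining blocks crudely should then yield inequality~\eqref{old_inequal} with error $O((r+t)^{8/3})$, exactly along the lines of~\cite{K & P}.

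To obtain the sharper inequality~\eqref{new_inequal} I would replace the crude estimate of the cross and $U\times U$ blocks by the Seeley--Newman refinement. Instead of a worst-case count of the normalised cross products, one averages over the choice of complement $C$ and of the adapted basis; this should show that the cross blocks can simultaneously be reduced to at most $\tfrac12 rst$ essential parameters while the residual terms shrink to $O((r+t)^{5/2})$. This is the analogue, for the (non-alternating) bilinear map attached to a ring, of the device that improves the error exponent from $8/3$ to $5/2$ in the enumeration of finite $p$-groups.

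The main obstacle, and the step that needs genuine care, is the bookkeeping in the $C\times C$ block: one must verify that the saving of order $r^2s$ really can be realised simultaneously across the $(r-s)^2$ products, and this is precisely where Lemma~\ref{cube zero} and the order in which the generators are adjoined are indispensable. The second delicate point is the Seeley--Newman averaging: because there is no skew-symmetry to exploit (unlike in the group case), the reduction of the cross blocks has to be carried out for left and right multiplication separately, and one must check that the bound $\tfrac12 rst$, together with the improved error term $O((r+t)^{5/2})$, survives this asymmetry.
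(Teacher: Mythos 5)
Your setup (multiplication factoring through a bilinear map $V\times V\to W$, the crude bound $\alpha(r,s,t)\le r^2t$) matches the paper, but the mechanism you propose for improving the coefficient of $r^2$ from $t$ to $t-s$ does not work, and this is the entire content of the lemma. Decomposing $V=U\oplus C$ with $U$ a Sims subspace and treating the $C\times C$ block via a filtration is a dead end: Lemma~\ref{cube zero} only gives a saving for subalgebras $S\supseteq R^2$ with $\dim S/R^2$ \emph{small} compared to $s$ (it yields $\dim S^2\le \dim(S/R^2)+t-s+1$, which is vacuous once $\dim (S/R^2)\ge s-1$). Any chain that contains $U$ has square equal to all of $W$, so adjoining the generators of $C$ one at a time on top of $U$ gives no constraint at all; and adjoining generators one at a time from scratch saves only $O(s^3)$ parameters in total (the bound $k+t-s+1$ is only better than $t$ for the first $s-1$ steps), whereas a saving of order $r^2s$ is needed. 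There is no basis for your claim that the $C\times C$ block contributes $r^2(t-s)$.

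The paper's actual device is different: assuming $s>2\sqrt r+1$ (the complementary case $s\le 2\sqrt r+1$ is disposed of by the crude bound, since then $r^2s=O(r^{5/2})$ — a case split you omit), one partitions \emph{all} $r$ generators into $g\approx\sqrt r$ blocks $V_1,\dots,V_g$ of size $f\approx\sqrt r$ and applies Lemma~\ref{cube zero} to $S=V_i+V_j+R^2$ for each \emph{pair} of blocks, getting $\dim(V_i+V_j)^2\le 2f+t-s+1=:d\approx t-s$. One then records, for each of the $\binom{g}{2}$ pairs, a $d$-dimensional subspace $W_{ij}\supseteq(V_i+V_j)^2$ at a cost of at most $d(t-d+1)$ per pair; every one of the $r^2$ products then lies in a known $d$-dimensional space and costs only $d\approx t-s$. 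The term $\tfrac12 rst$ in~\eqref{new_inequal} is exactly the bookkeeping cost $\binom{g}{2}d(t-d+1)\approx\tfrac r2(t-s)s$ of specifying the subspaces $W_{ij}$; it has nothing to do with the $U\times C$ cross blocks, and no Seeley--Newman averaging over complements occurs anywhere (in this paper the Seeley--Newman idea is the case split $r\le 3n/5$ versus $r\ge 3n/5$ in Theorem~\ref{main nilpotent}, which merely \emph{uses} the two inequalities of this lemma). Also note that~\eqref{old_inequal} is simply quoted from Kruse and Price, and that the upper bound never quotients by the $GL(V)\times GL(W)$ action, so ``removing structure constants by change of basis'' plays no role.
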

\begin{proof} The inequality (\ref{old_inequal}) is \cite[Theorem~5.4]{K & P}.
\par To prove (\ref{new_inequal}), let $x_{1}, x_{2}, \ldots, x_{r}$ be elements of $R$ that map onto a basis for $R/R^{2}$. The ring $R$ is determined up to isomorphism by the $r^{2}$ products $x_{i}x_{j}$ given by $1 \leq i, j \leq r$. Now $R^{2}$ contains $p^{t}$ elements, so there are at most $p^{r^{2}t}$ ways of choosing all our $x_{i}x_{j}$. Hence $\alpha(r, s, t) \leq r^{2}t$. If $s \leq 2r^{\frac{1}{2}} + 1$, then $-r^2s+\tfrac{1}{2}rst=\mathnormal{O}((r + t)^{5/2})$, and so this inequality for $\alpha(r, s, t)$ gives (\ref{new_inequal}).
\par We may now suppose that $s > 2r^{\frac{1}{2}} + 1$. Let $f = \lfloor r^{\frac{1}{2}}\rfloor$ be the greatest integer not exceeding $r^{\frac{1}{2}}$ and let $g = \lceil r/f\rceil$ be the least integer not less than $r/f$. Writing $\langle v_1,v_2,\ldots ,v_k\rangle$ for the $\mathbb{F}_{p}$-subspace spanned by elements $v_1,v_2,\ldots ,v_k\in R$, we may define $\mathbb{F}_{p}$-subspaces $V_{1}, V_{2}, \ldots, V_{g}$ of $R$ by
\begin{align*}
V_{i} & = \langle x_{(i-1)f+1}, x_{(i-1)f+2}, \ldots, x_{(i-1)f+f}\rangle  \hspace{2mm} \textrm{for} \hspace{2mm}1 \leq i < g \\
\textrm{and} \hspace{2mm} V_{g} & = \langle x_{(g-1)f+1}, x_{(g-1)f+2}, \ldots, x_{r}\rangle.
\end{align*}
Taking $S = V_{i} + V_{j} + R^{2}$ in Lemma \ref{cube zero} and noting that $R^{3} = 0$, we get
\begin{align}
\dim(V_{i} + V_{j})^{2} & \leq \dim(V_{i} + V_{j}) + t - s + 1 \nonumber \\
{} & = \dim V_{i} + \dim V_{j} + t - s + 1  \nonumber \\
{} & \leq 2f + t - s + 1  \label{third}  \\
{} & \leq 2r^{\frac{1}{2}} + t - s + 1 < t. \label{fourth}
\end{align}
Let $d = 2f + t - s + 1$. From (\ref{third}) and (\ref{fourth}), there is a $d$-dimensional subspace $W_{ij}$ of $R^{2}$ that contains $(V_{i} + V_{j})^{2}$. Now the number of such subspaces is
\begin{equation} \label{num of subspaces}
\frac{(p^{t} - 1)(p^{t} - p) \ldots (p^{t} - p^{d-1})}{(p^{d} - 1)(p^{d} - p) \ldots (p^{d} - p^{d-1})}.
\end{equation}
When $0 \leq i < d$, we have
\begin{displaymath}
p^{t} - p^{i} < p^{t} \leq p^{t+1} - p^{t} \leq p^{t+1} - p^{t-d+i+1} = p^{t-d+1}(p^{d}- p^{i}),
\end{displaymath}
so
\begin{equation} \label{fraction}
\frac{p^{t} - p^{i}}{p^{d} - p^{i}} \leq p^{t-d+1}.
\end{equation}
From (\ref{num of subspaces}) and (\ref{fraction}), the number of ways of choosing each subspace $W_{ij}$ is at most $p^{d(t-d+1)}$, so the number of ways of choosing all $\binom{g}{2}$ subspaces $W_{ij}$ is at most $p$ to the power $\binom{g}{2}d(t - d + 1)$. Once all the $W_{ij}$ have been chosen, there are at most $p^{d}$ choices for each product $x_{i}x_{j}$ with $1 \leq i, j \leq r$. So 
\begin{align*}
\alpha(r, s, t) & \leq \binom{g}{2}d(t - d + 1) + r^{2}d \\
{} & \leq \binom{g}{2}(2f + t - s + 1)(s - 2f) + r^{2}(2f + t - s + 1).
\end{align*} 
Now $f = r^{\frac{1}{2}} + \mathnormal{O}(1)$ and $g = r^{\frac{1}{2}} + \mathnormal{O}(1)$, so
\begin{align*}
\alpha(r, s, t) & \leq \tfrac{1}{2}r(t - s)s + r^{2}(t - s) + \mathnormal{O}((r + t)^{5/2}) \\
{} & \leq r^{2}(t - s) + \tfrac{1}{2}rst - \tfrac{1}{2}rs^{2} + \mathnormal{O}((r + t)^{5/2}) \\
{} & \leq r^{2}(t - s) + \tfrac{1}{2}rst  + \mathnormal{O}((r + t)^{5/2}).\qedhere
\end{align*}
\end{proof}

For a representative $R$ of each of the $p^{\alpha(r,s,t)}$ isomorphism classes of $\mathbb{F}_p$-algebras of the form above, choose a subalgebra $S$ such that $S^2=R^2$ and $\dim S/R^2 = s$. We choose a basis $x_1,x_2,\ldots,x_r,y_1,y_2,\ldots y_t$ of $R$, which we call the \emph{standard basis for $R$}, with the following properties: the elements $x_1+R^2,x_2+R^2,\ldots,x_s+R^2$ form a basis for $S/R^2$; the elements $x_1+R^2,x_2+R^2,\ldots,x_r+R^2$ form a basis for $R/R^2$; the elements $y_1,y_2,\ldots,y_t$ form a basis for $R^2$; each element $y_i$ may be written in the form $y_i=x_kx_\ell$ for some $k,\ell\in\{1,2,\ldots,s\}$. A standard basis exists, since $R$ has cube zero and Sims dimension~$s$. For each element $y_i$, we choose an equality of the form $y_i=x_kx_\ell$ for some $k,\ell\in\{1,2,\ldots,s\}$, and call this the \emph{standard monomial representation} of $y_i$.

\begin{Thm} \label{main nilpotent}
The number of (isomorphism classes of) nilpotent rings of order $p^{n}$ is $p^{\alpha}$, where
\begin{displaymath}
\alpha = \frac{4}{27}n^{3} + \mathnormal{O}(n^{5/2}).
\end{displaymath}
\end{Thm}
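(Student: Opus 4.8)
The plan is to prove matching upper and lower bounds, the upper bound being the substantial half. For the lower bound I would count cube-zero $\mathbb{F}_p$-algebras directly: fix $r=\lceil 2n/3\rceil$ and $t=n-r$, and observe that such an algebra with $\dim(R/R^2)=r$ and $\dim R^2=t$ is exactly a surjective bilinear map $(R/R^2)\times(R/R^2)\to R^2$. There are $p^{r^2t}$ bilinear maps, two of them yielding isomorphic algebras only if related by the action of $\mathrm{GL}_r(\mathbb{F}_p)\times\mathrm{GL}_t(\mathbb{F}_p)$, a group of order $p^{\Oh(n^2)}$; this gives at least $p^{r^2t-\Oh(n^2)}=p^{\frac{4}{27}n^3-\Oh(n^2)}$ classes, since $r^2t$ subject to $r+t=n$ is maximised at $r=2n/3,\ t=n/3$.

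For the upper bound, the first observation is that a nilpotent ring $R$ of order $p^n$ is generated as a ring by $r=\dim_{\mathbb{F}_p} R/(R^2+pR)$ elements $x_1,\dots,x_r$: by nilpotency a set generates $R$ as a ring as soon as it generates $R/R^2$ as an abelian group. Using distributivity and associativity, the multiplication is then determined by the $r^2$ products $x_ix_j\in R^2$, so the isomorphism type of $R$ is fixed once we know the additive group of $R$ (only $p^{\Oh(\sqrt n)}$ choices) and these $r^2$ elements. I would split each product along the filtration: its image in $\overline{R}^2/\overline{R}^3$, where $\overline{R}=R/pR$, is governed by the cube-zero $\mathbb{F}_p$-algebra $A=\overline{R}/\overline{R}^3$, while the remaining part lies in $R^3+(pR\cap R^2)$.

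The dominant contribution is the count of $A$. Writing $(r,s,t)$ for its invariants as in Lemma~\ref{alpha} and noting $r+t=\dim A\le n$, Lemma~\ref{alpha} gives at most $p^{r^2(t-s)+\frac12 rst+\Oh(n^{5/2})}$ classes per triple. A short optimisation bounds the exponent by $\frac{4}{27}n^3+\Oh(n^{5/2})$: if $s\le 2\sqrt r+1$ then $\tfrac12 rst=\Oh(n^{5/2})$ and $r^2(t-s)\le r^2t\le\frac{4}{27}n^3$; otherwise write $r^2(t-s)+\tfrac12 rst=r^2t-rs(r-\tfrac t2)$ and treat the cases $r\ge t/2$ (where we subtract a non-negative quantity) and $r<t/2$ (which forces $r<n/3$ and hence a total at most $\frac{n^3}{9}$). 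Since there are $\Oh(n^3)$ triples, the number of possibilities for $A$ is $p^{\frac{4}{27}n^3+\Oh(n^{5/2})}$.

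The step I expect to be the main obstacle is showing that the part of the products lying in $R^3+(pR\cap R^2)$, together with the additive relations $p^{e_i}x_i=\cdots$, contributes only $p^{\Oh(n^{5/2})}$. A naive count of $r^2\dim(R^3+pR)$ constants is far too weak, since $R^3$ and $pR$ need not be small; this is exactly where the Sims dimension and the Seeley--Newman device enter. I would mimic the block argument in the proof of Lemma~\ref{alpha}, partitioning $x_1,\dots,x_r$ into $\approx\sqrt r$ blocks $V_1,\dots,V_g$ of size $\approx\sqrt r$ and bounding the products block-pair by block-pair, each pair spanning only a low-dimensional subspace. The two horns of the dichotomy then balance: either the Sims dimension $s$ of $\overline{R}/\overline{R}^3$ is large, so $A$ itself has few classes and there is slack to absorb the deeper structure, or $s$ is small, so the few Sims generators $x_1,\dots,x_s$ generate a subalgebra containing $\overline{R}^2$ and their products determine most of $R$ cheaply. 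Keeping both contributions within $\Oh(n^{5/2})$ is the delicate point; combined with the count of $A$ it yields $p^{\frac{4}{27}n^3+\Oh(n^{5/2})}$, matching the lower bound.
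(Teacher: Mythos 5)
Your lower bound and your treatment of the cube-zero quotient $A=\overline{R}/\overline{R}^3$ via Lemma~\ref{alpha} are fine, and your optimisation of $r^2(t-s)+\tfrac12 rst$ is essentially the Seeley--Newman dichotomy the paper uses. But there are two genuine gaps in the upper bound, and the first is fatal as stated. The claim that ``the multiplication is determined by the $r^{2}$ products $x_ix_j\in R^2$'' together with the additive group is unjustified and false for general nilpotent rings: the $x_i$ generate $R$ as a ring but not as an abelian group, so knowing the elements $x_ix_j\in G$ does not determine the additive endomorphism ``left multiplication by $x_i$'' on all of $G$. You still need the products $x_ie_j$, where the $e_j$ are additive generators of $R^2$; trying to recover $x_i(x_kx_\ell)$ as $(x_ix_k)x_\ell$ is circular, because $x_ix_k$ is an element of $R^2$ and you again need to multiply an element of $R^2$ by a generator. (For cube-zero $\mathbb{F}_p$-algebras these extra products vanish, which is why the naive $r^2t$ count works there but not here.) The paper's proof spends most of its effort breaking this circularity: it specifies $x_ie_j$ only for $i\leq s$ (with $s$ the Sims dimension), chooses each $e_j$ to be a monomial in the Sims generators $x_1,\dots,x_s$ compatibly with the filtration by powers of $\overline{R}$, and then deduces $x_ae_b$ for $a>s$ from associativity. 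Without some such device your list of structure constants does not determine the ring, and with the obvious enlarged list ($x_ie_j$ for all $i\leq r$) the count is too large.

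Second, your stated target for the remaining data --- that the components of the products lying in $R^3+(pR\cap R^2)$ contribute only $p^{\Oh(n^{5/2})}$ --- is not achievable and is not what the paper proves. The corresponding terms in the paper's exponent, namely $r^2u$ for the $\overline{R}^3$-components, $\tfrac12 s\big((u+t)^2-t^2\big)$ for the products $x_ie_j$, and $\{r^2+s(w-r)\}(n-w)$ for the lift from $\overline{R}$ to $R$, are each of order $n^3$ in some parameter ranges (e.g.\ $r^2u=\Theta(n^3)$ when $r,u=\Theta(n)$). The theorem holds because the \emph{sum} of these terms with $\alpha(r,s,t)$, maximised jointly over all admissible $(r,s,t,u,w)$, is $\frac{4}{27}n^3+\Oh(n^{5/2})$: when $\alpha(r,s,t)$ is near $\frac{4}{27}n^3$ one has $r+t\approx n$, which forces $u$ and $n-w$ to be small and kills the other terms. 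So the required argument is a joint optimisation of a five-parameter expression, not a proof that the non-$A$ contribution is separately negligible. You flag this step as the delicate one, but the plan you outline for it (each piece within $\Oh(n^{5/2})$) would fail, so the proposal is incomplete at precisely the point where the content of the theorem lies.
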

\begin{proof} From \cite[Theorem 2.2]{K & P}, the number of $\mathbb{F}_{p}$-algebras of cube zero and order $p^{n}$ is $p^{\alpha'}$, where $\alpha' = 4n^{3}/27 + \mathnormal{O}(n^{2})$. Hence it is sufficient to prove that $p^{\alpha}$ is an upper bound for the number of nilpotent rings of order $p^{n}$.

Let $R$ be a nilpotent ring of order $p^n$, and let $\overline{R} = R/pR$, which is an $\mathbb{F}_{p}$-algebra of order $p^w$ where $1\leq w\leq n$. The additive structure of $R$ is isomorphic to an abelian group of order $p^{n}$ and rank $w$. We fix some standard representative $G$ for this isomorphism class of abelian groups. Let $r = \dim(\overline{R}/\overline{R}^{2})$, let $s$ be the Sims dimension of $\overline{R}$, let $t = \dim(\overline{R}^{2}/\overline{R}^{3})$ and $u = \dim(\overline{R}^{3})$. Let $m$ be the least integer such that $\overline{R}^{m} = 0$. For $h\geq 2$, define $u_h=\dim \overline{R}^h/\overline{R}^{h+1}$.

The number of choices for the isomorphism class of an abelian group $G$ of order $p^n$ is the number of partitions of $n$, which is at most $2^{n-1}<p^n$. There are at most $n+1$ choices for each of $w$, $r$, $s$, $t$ and $m$. The integer $u$ is then determined by $u=w-(r+t)$. We see that $u_h=0$ for $h\geq m$. Now, $(u_2,u_3,\ldots,u_{m-1})$ is a sequence of $m-2$ positive integers that sum to $u+t$, and so there are at most $2^{u+t-1}<p^{n}$ choices for the integers~$u_h$.
  So we have made $(n+1)^5 p^{2n}=p^{\mathnormal{O}(n)}$ choices in all. From now on, we assume that $w$, $r$, $s$, $t$, $m$ and the integers $u_h$ are fixed.

The quotient $\overline{R}/\overline{R}^3$ is an $\mathbb{F}_p$-algebra of order $p^{r+t}$ and cube zero, whose square has order $p^t$ and whose Sims dimension is $s$. We choose one of the $p^{\alpha(r,s,t)}$ isomorphism classes of $\overline{R}/\overline{R}^3$, and from now on we assume this choice is also fixed.

Let $\overline{x}_{1}, \ldots, \overline{x}_{r}\in \overline{R}$ map onto the first $r$ elements of the standard basis for $\overline{R}/\overline{R}^3$. Let $\overline{S}$ be the subring of $\overline{R}$ generated by the first $s$ elements $\overline{x}_{1}, \ldots, \overline{x}_{s}$. Because our basis of $\overline{R}/\overline{R}^3$ is standard, $\overline{S}^{2}+\overline{R}^3 = \overline{R}^{2}$. By~\cite[Theorem~5.2]{K & P}, using the fact that $\overline{R}^m=0$, we see that  
\begin{equation} \label{powers of S}
\overline{S}^{i} = \overline{R}^{i} \quad \textrm{ for all integers} \ i \geq 2.
\end{equation}

For $i\geq 2$, let $d(i)=\dim(\overline{R}^{i})$. So  $d(2)=u+t$, and $d(i) = \sum_{h=i}^{m-1}u_h$ when $i\geq 3$. We will choose a basis $\overline{e}_{1}, \ldots, \overline{e}_{u+t}$ of $\overline{R}^{2}$ in such a way that $\overline{e}_{1}, \ldots , \overline{e}_{d(i)}$ is a basis of $\overline{R}^{i}$ for all $i\geq 2$, and each element of the basis is a monomial in $\overline{x}_{1}, \ldots, \overline{x}_{s}$. We do this as follows. We first choose monomials $\overline{e}_{u+1}, \ldots, \overline{e}_{u+t}$ in $\overline{x}_{1}, \ldots, \overline{x}_{s}$ whose images in $\overline{R}/\overline{R}^3$ are $y_1,y_2,\ldots,y_t$ from the standard basis for $\overline{R}/\overline{R}^3$; we use the standard monomial representations of the standard basis to do this. Since~\eqref{powers of S} holds, the set of elements $\{\overline{x}_i\overline{e}_j:1\leq i\leq s\text{ and }u+1\leq j\leq u+t\}$ span $\overline{R}^3$ modulo $\overline{R}^4$. We choose a subset $\overline{e}_{d(4)+1}, \ldots , \overline{e}_{d(3)}$ of this set which is a basis of $\overline{R}^3$ modulo $\overline{R}^4$. We continue in this way, choosing $\overline{e}_{d(i+1)+1}, \ldots , \overline{e}_{d(i)}$ to be a basis of $\overline{R}^i$ modulo $\overline{R}^{i+1}$ contained in the set $\{\overline{x}_i\overline{e}_j:1\leq i\leq s\text{ and }d(i)+1\leq j\leq d(i-1)\}$, to produce the basis of the form we require.

We have chosen the integers $w$, $r$, $s$, $t$, $m$, the integers $u_h$, and the isomorphism class of $\overline{R}/\overline{R}^3$. 
Though we do not need this, we remark that $\overline{R}$ is determined up to isomorphism by these choices, together with a knowledge of each of the $r^{2} + s(w - r)$ products 
\begin{eqnarray*}
{} & \overline{x}_{i}\overline{x}_{j} & \textrm{for all $i,j$ such that $1 \leq i, j \leq r$} \\
\textrm{and} & \overline{x}_{i}\overline{e}_{j} & \textrm{for all $i, j$ such that $1 \leq i \leq s$ and $1 \leq j \leq u + t = w - r$}
\end{eqnarray*}
as a linear combination of the $\overline{e}_{k}$. We can see this as follows. First note that the way we have chosen our basis allows us to deduce a representation of each $\overline{e}_j$ as a monomial in $\overline{x}_1,\ldots, \overline{x}_s$ from these $r^{2} + s(w - r)$ products. Then note that a product of two elements of the form $\overline{x}_i$ or $\overline{e}_j$ can be computed from associativity together with the fact that each $\overline{e}_{j}$ is a monomial of at least second degree in $\overline{x}_{1}, \ldots, \overline{x}_{s}$. (We give a similar argument for $R$, in a little more detail, below.)

There are coefficients $\lambda_{i,j,k}; \mu_{i,j,k}; \nu_{i,j,k} \in \mathbb{F}_{p}$ such that
\begin{align*}
\overline{x}_{i}\overline{x}_{j} & = \sum_{k=1}^{u}\lambda_{i,j,k}\overline{e}_{k} + \sum_{k=u+1}^{u+t}\mu_{i,j,k}\overline{e}_{k} \text{ where }1 \leq i, j \leq r,\\
\overline{x}_{i}\overline{e}_{j} & = \sum_{k=1}^{d(k+1)}\nu_{i,j,k}\overline{e}_{k} \text{ where } 1 \leq i \leq s,\,\,  \overline{e}_{j} \in \overline{R}^{k}\setminus\overline{R}^{k+1} \text{ and }2\leq h\leq m-2,\\
\overline{x}_{i}\overline{e}_{j} & = 0 \text{ where } 1 \leq i \leq s \text{ and } \overline{e}_{j} \in \overline{R}^{m-1}.
\end{align*}
The values of coefficients $\mu_{i,j,k}$ are determined by the isomorphism class of $\overline{R}/\overline{R}^3$, since the images of $\overline{x}_{1}, \ldots,  \overline{x}_{r},\overline{e}_{u+1}, \ldots, \overline{e}_{u+t}$ in $\overline{R}/\overline{R}^3$ form a standard basis. There are $r^{2}u$ coefficients $\lambda_{i,j,k}$, each of which is an element of $\mathbb{F}_{p}$, so the number of possible values of all the $\lambda_{i,j,k}$ is $p^{r^{2}u}$. For each $h$ in the range $2 \leq h \leq m - 1$, there are $u_{h}$ values of $j$ for which $\overline{e}_{j} \in \overline {R}^{h}\setminus\overline{R}^{h+1}$, so the total number of coefficients $\nu_{i,j,k}$ is
\begin{align*}
s\sum_{h=2}^{m-2}u_{h}d(h + 1) & =  s\sum_{h=2}^{m-2}u_{h}(u_{h+1} + u_{h+2} + \ldots + u_{m-1}) \\
{} & = \tfrac{1}{2}s\big((u_{2} + u_{3} + \cdots + u_{m-1})^{2} - u_{2}^{2} - u_{3}^{2} - \cdots - u_{m-1}^{2}\big)\\
&\leq \tfrac{1}{2}s\left((u_2+u_3+\cdots+u_{m-1})^2-u_2^2\right)\\
{} &= \tfrac{1}{2}s\left((u + t)^{2} - t^{2}\right).
\end{align*}
Hence the number of choices for the $r^{2}$ products $\overline{x}_{i}\overline{x}_{j}$ and the $s(w - r)$ products $\overline{x}_{i}\overline{e}_{j}$ is at most $p^{\beta}$, where 
\begin{align*}
\beta &= r^{2}u + \tfrac{1}{2}s\left((u + t)^{2} - t^{2}\right)\\
&=r^{2}(w - r - t) + \tfrac{1}{2}s\left((w - r)^{2} - t^{2}\right),
\end{align*}
since $u = w - r - t$. By \cite[Corollary 5.3]{K & P}, $s \leq t + 1$, so
\begin{equation} \label{beta}
\beta \leq  r^{2}(w - r - t) + \tfrac{1}{2}s\left((w - r)^{2} - (s - 1)^{2}\right).
\end{equation}
We now fix one of the $p^\beta$ choices for the coefficients $\lambda_{i,j,k}$ and $\nu_{i,j,k}$, thus determining the isomorphism class of $\overline{R}$.

Lift the basis elements $\overline{x}_{1}, \ldots \overline{x}_{r}$ arbitarily to elements $x_{1}, \ldots x_{r}$ of $R$. Each element $\overline{e}_i$ is a monomial in the elements $\overline{x}_{1},\overline{x}_{2},\ldots,\overline{x}_{s}$, as a consequence of the products above. We lift $\overline{e}_i$ to the corresponding monomial $e_i\in R$ in the elements $x_1,x_2,\ldots,x_s$. The elements $e_{1}, \ldots, e_{u+t}, x_{1}, \ldots, x_{r}$ are additive generators of $R$, since $R$ is a group of prime power order under addition and the elements additively generate $R$ modulo $pR$.

We have already chosen an additive group $G$ for $R$. There are at most $p^n$ ways of representing an element $x_i$ or $e_i$ as an element of $G$. There are at most $n$ elements to represent, so there are at most $p^{n^2}$ choices for these representations. After making these choices, addition in the ring (with elements represented as sums of the elements $x_i$ and $e_i$ in $G$) is determined. We choose the values of the $r^{2}$ products $x_{i}x_{j}$ where $1\leq i,j\leq r$ and the $s(w - r)$ products $x_{i}e_{j}$ where $1\leq i\leq s$ and $1\leq j\leq u+t=w-r$. Each of these $r^{2} + s(w - r)$ products is known modulo $pR$ and the number of elements in $pR$ is $p^{n - w}$ so the number of possibilities for these products, after the choices we have fixed above, is at most $p^{\gamma}$, where  
\[
\gamma = \{r^{2} + s(w - r)\}(n - w).
\]

Once we have made these choices, we claim that multiplication in $R$, and hence the isomorphism class of $R$, is determined. To see this, note that left multiplication by $x_i$ is determined by the products above for any $1\leq i\leq s$, since $G$ is generated by the elements $x_j$ and $e_j$. So left multiplication by any monomial in $x_1,x_2,\ldots,x_s$ is determined, by induction on the degree of the monomial. Each element $e_j$ is a monomial in $x_1,x_2,\ldots,x_s$, and (by our choice of elements $e_j$) a suitable monomial may be deduced from the products above. So left multiplication by each element $e_j$ is also determined.  We now show that left multiplication by $x_a$, for $s+1\leq a\leq  r$, is a consequence of the products above. To show this, it is sufficient to consider each product $x_a e_b$ for $1\leq b\leq w-r$. We know that $e_b$ is equal to some monomial $x_{i_1}x_{i_2}\cdots x_{i_k}$ where $k\geq 2$ and $1\leq i_1,i_2,\ldots, i_k\leq s$. So, by associativity, $x_a e_b=(x_a x_{i_1})\cdot (x_{i_2}\cdots x_{i_k})$. The second factor is a monomial in $x_1,x_2,\ldots,x_s$, and so is determined. The first factor is a sum of elements $e_j$ since it lies in $R^2$, and its value is determined by one of the products we have fixed. Since left multiplication by each element $e_j$ is determined, the product $x_a e_b$ is determined. Since addition in the ring is fixed, we see that left multiplication by any sum of the elements $x_i$ and $e_i$, in other words by any element of the ring, is determined using the distributive property of multiplication. So the product of any two elements in our ring is determined, and our claim follows.

To summarise, we have shown that the number of isomorphism classes of rings of order $p^n$ is at most $p^{\delta+\mathnormal{O}(n^2)}$, where $\delta$ is the maximum value of
\begin{multline} \label{delta and alpha}
\alpha(r, s, t) +  \{r^{2} + s(w - r)\}(n - w) + r^{2}(w - r - t)  \\
 + \tfrac{1}{2}s\{(w - r)^{2} - (s - 1)^{2}\}
\end{multline}
over all possible values of $r,s,t$ and $w$. It suffices to show that $\delta\leq \frac{4}{27}n^{3} + \mathnormal{O}(n^{5/2})$. 

If $w=n$ and $u=0$ then $R = \overline{R}$ and $\overline{R}^{3} = 0$, in which case the desired result follows from \cite[Theorem~2.2]{K & P}. So we may assume that $u>0$ or $w<n$. From \cite[Corollary 5.3]{K & P},  $0\leq s\leq t+1$. Hence
\[
r + s \leq r + t + 1 \leq r+t+u+1 = w+1 \leq n+1
\]
where one of the last two inequalities is strict; thus $r + s \leq n$. It is clear that $0 \leq s \leq r$. Now let $x = r/n, y = s/n, z = t/n$ and $v = w/n$. Then 
\begin{equation} \label{triangle}
0 \leq y \leq x \quad \textrm{and} \quad x + y \leq 1.
\end{equation}
Following Seeley and Newman's idea~\cite{SeeleyNewman}, we consider separately the cases $x \leq 3/5$ and $x \geq 3/5$.
\par When $x \leq 3/5$, we substitute the inequality~\eqref{old_inequal} from Lemma \ref{alpha} into~(\ref{delta and alpha}). Allowing the final error term to absorb minor terms, we get
\begin{displaymath}
\delta \leq  \{sw + (r - s)r\}(n - w) + r^{2}(w - r - s) + \tfrac{1}{2}s\{(w - r)^{2} - s^{2}\} + \mathnormal{O}(n^{8/3}) .
\end{displaymath}
So
\begin{align*}
\frac{\delta}{n^{3}} & \leq  (yv + x^{2} - xy)(1 - v) + x^{2}(v - x - y) + \tfrac{1}{2}y((v - x)^{2} - y^{2}) + \mathnormal{O}(n^{-1/3}) \\
 & = x^{2}(1 - x) + \tfrac{1}{2}y\{2 - (x + 1)^{2} - (1 - v)^{2}\} - \tfrac{1}{2}y^{3} + \mathnormal{O}(n^{-1/3}) \\
& \leq   x^{2}(1 - x) + \tfrac{1}{2}y\{2 - (x + 1)^{2}\} - \tfrac{1}{2}y^{3} + \mathnormal{O}(n^{-1/3})\\
&\leq 18/125+\mathnormal{O}(n^{-1/3}),
\end{align*}
where the last inequality follows by applying standard calculus techniques over the region where the constraints $x\leq 3/5$ and~\eqref{triangle} hold. Hence, as $\frac{18}{125} < \frac{4}{27}$, we get $\delta \leq\frac{4}{27}n^{3} + \mathnormal{O}(n^{5/2})$ when $x \leq 3/5$, as desired.

When $x \geq 3/5$ we substitute the inequality~\eqref{new_inequal} from Lemma~\ref{alpha} into~\eqref{delta and alpha}. Allowing the final error term to absorb minor terms, we get
\begin{displaymath}
\delta \leq  \{sw + (r - s)r\}(n - w) + r^{2}(w - r - s) + \tfrac{1}{2}rst + \tfrac{1}{2}s\{(w - r)^{2} - s^{2}\} + \mathnormal{O}(n^{5/2}).
\end{displaymath}
So we find that
\begin{align*}
\frac{\delta}{n^{3}} &\leq (yv + x^{2} - xy)(1 - v) + x^{2}(v - x - y) + \tfrac{1}{2}xyz\\
&\quad + \tfrac{1}{2}y((v - x)^{2} - y^{2}) + \mathnormal{O}(n^{-\frac{1}{2}})\\
&\leq \frac{4}{27}+ \mathnormal{O}(n^{-\frac{1}{2}}),
\end{align*}
with the last inequality following by applying standard calculus techniques over the region where the constraints $x\geq 3/5$, $z\leq 1$, $v\leq 1$ and~\eqref{triangle} hold.

Hence in both cases, whether $x \leq 3/5$ or $x \geq 3/5$, we have 
\begin{displaymath}
\delta \leq \frac{4}{27}n^{3} + \mathnormal{O}(n^{5/2}).\qedhere
\end{displaymath}
\end{proof}

\section{Finite rings in general}
\label{General case}

For a ring $R$, we write $J(R)$ for the Jacobson radical of $R$. The following theorem provides some structure theory for finite rings that we require.
\begin{Thm}
\label{S_exist}
Let $R$ be a finite non-nilpotent ring of $p$-power order, not necessarily with an identity element. There exists a subring $S$ of $R$ such that:
\begin{enumerate}
\item[\textup{(i)}] $S+J(R)=R$,
\item[\textup{(ii)}] $S$ has a multiplicative identity,
\item[\textup{(iii)}] $J(S)=pS$, and
\item[\textup{(iv)}] $S\cap J(R)=pS$.
\end{enumerate}
\end{Thm}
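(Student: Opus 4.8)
The plan is to realize $S$ as a direct product of matrix rings over Galois rings, built by lifting the Wedderburn--Artin decomposition of $\overline{R}:=R/J(R)$ together with Teichm\"uller representatives of the residue fields. Throughout I use that $J(R)$ is nilpotent (as $R$ is finite, hence Artinian) and that $\overline{R}$ is a nonzero semisimple ring with an identity, since $R$ is non-nilpotent. First I would reduce to the case where $R$ has an identity. Because $J(R)$ is nilpotent, idempotents lift modulo $J(R)$, so I lift the identity of $\overline{R}$ to an idempotent $e\in R$ and pass to the corner ring $R'=eRe$. One checks that $R'+J(R)=R$, that $J(R')=eJ(R)e=R'\cap J(R)$, and that $R'$ has identity $e$ with $R'/J(R')\cong\overline{R}$. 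Any subring $S\subseteq R'$ with $S+J(R')=R'$ and $S\cap J(R')=pS=J(S)$ then satisfies (i)--(iv) for $R$, so it suffices to produce such an $S$ inside $R'$; I therefore assume henceforth that $R$ has an identity.

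Next I would expose the matrix structure. By Wedderburn--Artin, together with Wedderburn's theorem on finite division rings, $\overline{R}\cong\prod_{i=1}^{k}M_{n_i}(\mathbb{F}_{q_i})$ with $q_i=p^{d_i}$. Lifting a complete system of matrix units of $\overline{R}$ through the nilpotent ideal $J(R)$ yields orthogonal idempotents and matrix units $e^{(i)}_{ab}\in R$ satisfying the same relations with $\sum_{i,a}e^{(i)}_{aa}=1$. Writing $\epsilon_i=\sum_a e^{(i)}_{aa}$, orthogonality gives $(\epsilon_i R\epsilon_i)(\epsilon_j R\epsilon_j)=0$ for $i\neq j$, the corner $T_i:=e^{(i)}_{11}Re^{(i)}_{11}$ is a finite \emph{local} ring with residue field $\mathbb{F}_{q_i}$, and $\epsilon_i R\epsilon_i\cong M_{n_i}(T_i)$ via the matrix units. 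I would then set $S=\bigoplus_i S_i$, where $S_i\subseteq\epsilon_i R\epsilon_i$ is the copy of $M_{n_i}(C_i)$ determined by a coefficient ring $C_i\subseteq T_i$ constructed below; the vanishing cross terms make $S$ a subring, in fact the ring direct product $\prod_i M_{n_i}(C_i)$.

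The heart of the argument, and the step I expect to be the main obstacle, is the construction of a coefficient (Galois) ring in a finite local ring $T$ with residue field $\mathbb{F}_q$, $q=p^d$. Since $J(T)$ is nilpotent, the reduction $T^{\times}\to\mathbb{F}_q^{\times}$ is onto with $p$-group kernel $1+J(T)$, so the cyclic group $\mathbb{F}_q^{\times}$ of order prime to $p$ lifts uniquely to a group of $(q-1)$-th roots of unity; let $\zeta$ be a generator and put $C=P[\zeta]$, where $P=\mathbb{Z}\cdot 1\cong\mathbb{Z}/p^{c}$ is the prime ring of $T$. This $C$ is commutative, surjects onto $\mathbb{F}_q$ because $\overline{\zeta}$ generates $\mathbb{F}_q^{\times}$, and is local. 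To identify it as a Galois ring I would apply Hensel's lemma in $P[x]$ to the separable polynomial $x^{q-1}-1$: it factors into pairwise coprime monic lifts of its irreducible factors mod $p$, and from $\prod_j g_j(\zeta)=0$ with all but one factor a unit of $T$ one deduces that $\zeta$ satisfies a single monic $g\in P[x]$ of degree $d$ that is irreducible mod $p$. Hence $C$ is a quotient of $GR(p^{c},d)=P[x]/(g)$, and comparing the additive order $p^{c}$ of $1$ shows the surjection $GR(p^{c},d)\to C$ has trivial kernel; thus $C\cong GR(p^{c},d)$ and in particular $J(C)=pC$. The delicate points are the unique lifting of roots of unity and the characteristic/size bookkeeping that forces $J(C)=pC$ rather than merely $pC\subseteq J(C)$.

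Finally I would verify (i)--(iv) for $S=\prod_i M_{n_i}(C_i)$. It has identity $\sum_i\epsilon_i=1$, giving (ii); each factor surjects onto $M_{n_i}(\mathbb{F}_{q_i})$, so $S+J(R)=R$, giving (i); and $J(S)=\prod_i M_{n_i}(J(C_i))=\prod_i M_{n_i}(pC_i)=pS$, giving (iii). For (iv), $S\cap J(R)$ is an ideal of $S$ that is nilpotent, being contained in $J(R)$, hence $S\cap J(R)\subseteq J(S)=pS$; conversely $pS=J(S)\subseteq S\cap J(R)$ since $S/(S\cap J(R))\cong\overline{R}$ is semisimple. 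Therefore $S\cap J(R)=pS$, completing the plan.
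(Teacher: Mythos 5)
Your proof is correct, but it takes a genuinely different route from the paper's. The paper argues non-constructively: it takes $S$ to be a subring \emph{minimal} subject to $S+J(R)=R$, and then forces properties (ii) and (iii) out of minimality --- (ii) by lifting the Wedderburn--Artin identity of $R/J(R)$ to an idempotent $e$ that is a polynomial in an element of $S$ and observing $eSe=S$, and (iii) by applying the Wedderburn--Malcev theorem to the $\mathbb{F}_p$-algebra $S/pS$ to produce a complement $T$ of $J(S)/pS$ with $T+J(R)=R$, whence $T=S$ and $J(S)\subseteq pS$. You instead build $S$ explicitly: reduce to the unital case via the corner ring $eRe$, lift a complete system of matrix units through the nilpotent radical, and manufacture Galois coefficient rings inside each local corner via Teichm\"uller lifts and Hensel's lemma, taking $S=\prod_i M_{n_i}(C_i)$. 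Both arguments are sound. Your route requires heavier machinery (lifting full systems of matrix units, the chain-ring structure of Galois rings) and your passing claim that the roots of unity lift ``uniquely'' is not needed and is only literally true up to conjugacy when $T_i$ is noncommutative --- but existence is all your argument uses. What your approach buys is that it proves Lemma~\ref{S_structure} (Clark's structure theorem for the coefficient ring, which the paper cites and then uses in Corollary~\ref{S_count}) simultaneously and for free, so in the context of the whole paper it is arguably more self-contained; what the paper's approach buys is brevity and the avoidance of any explicit structure theory at this stage.
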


We say that the subring $S$ is a \emph{coefficient ring} of $R$. (This terminology comes from the situation when $R$ has an identity element, as then every element of $R$ can be written as a polynomial in the generators of $J(R)$ with coefficients in $S$. This is not necessarily true in the more general situation.)

\begin{proof}
Let $S$ be a subring of $R$ that is minimal subject to~(i) holding. (Clearly a minimal subring exists, since $R$ is finite and since~(i) holds when $S=R$.) We will show that properties (ii), (iii) and (iv) all hold.

We establish property~(ii) first. The ring $R$ is artinian, as it is finite. Since the Jacobson radical $J(R)$ of $R$ is nilpotent, but $R$ is not nilpotent, $J(R)$ is a proper ideal of $R$. The quotient $R/J(R)$ is semi-simple and non-trivial, and so the Wedderburn--Artin Theorem implies in particular that $R/J(R)$ has a multiplicative identity. By property~(i) this identity may be written in the form $s+J(R)$ for some $s\in S$. Since $J(R)$ is nilpotent, it is a nil ideal. So the idempotent $s+J(R)$ may be lifted to an idempotent $e\in R$. Indeed (see, for example, the proof of~\cite[Proposition~III.8.3]{Jacobson}), $e$ may be taken to be a polynomial in $s$, and so we may assume that $e\in S$. Now $eSe$ is a subring of $S$ with identity $e$. Moreover, since $e+J(R)=s+J(R)$, we see that $e+J(R)$ is the identity in $R/J(R)$ and so $eSe+J(R)=S+J(R)$. By the minimality of $S$, we see that $eSe=S$, and so property~(ii) is established.

We now establish property~(iii), using an approach that is inspired by~\cite[Lemma~1]{Clark}.

Let $R$ have characteristic $p^k$. Then $pR$ is a nil (two-sided) ideal; indeed $pR$ is actually nilpotent, since $(pR)^k=0$. Since $J(R)$ contains all nil ideals~\cite[Theorem I.6.2]{Jacobson}, we see that $pR\subseteq J(R)$. Similarly,
\begin{equation}
\label{pSinJS}
pS\subseteq J(S).
\end{equation} 

Now, $J(R/J(R))$ is trivial~\cite[Theorem~I.2.2]{Jacobson}. Since~(i) holds, the natural map from $S$ to $R/J(R)$ is surjective, and so (see~\cite[Proposition~I.7.1]{Jacobson}) the radical $J(S)$ is mapped into the trivial subring $J(R/J(R))$. Hence
\[
J(S)\subseteq J(R).
\]
In particular, combining with~\eqref{pSinJS}, we find that
\begin{equation}
\label{pSinJR}
pS\subseteq J(R).
\end{equation}

The quotient ring $S/pS$ is an $\mathbb{F}_p$-algebra. By~\eqref{pSinJS}, we find that $J(S/pS)=J(S)/pS$ (see~\cite[Proposition~10.4.3]{Cohn}). Now, the Wedderburn--Malcev Theorem for $\mathbb{F}_p$-algebras~\cite[Theorem~72.19]{CurtisReiner} shows that there exists a subring $T$ of $S$ containing $pS$ with the property that there is an additive decomposition
\begin{equation}
\label{WMdecomp}
S/pS=T/pS\oplus J(S/pS)=T/pS\oplus J(S)/pS.
\end{equation}
We see that
\[
R=S+J(R)=(T+J(S)+pS)+J(R)=T+J(R).
\]
By the minimality of $S$, we find that $T=S$, and so~\eqref{WMdecomp} implies that
\[
J(S)\subseteq pS.
\]
This, together with~\eqref{pSinJS} implies property~(iii).

To establish property~(iv), first note that
$pS\subseteq pR\subseteq J(R)$ and so $pS\subseteq S\cap J(R)$.
Secondly, note that $J(R)$ is a nil ideal of $R$ (as it is nilpotent), so $S\cap J(R)$ is a nil ideal of $S$. Hence
\[
S\cap J(R)\subseteq J(S)=pS,
\]
by~(iii) and because $J(S)$ contains all nil ideals in $S$ (see~\cite[Corollary to Theorem~I.6.2]{Jacobson}). Hence~(iv) follows and the theorem is proved.
\end{proof}

We comment that there is a natural, but less elementary, proof of Theorem~\ref{S_exist}~(iii) using Azumaya's generalized Wedderburn-Malcev theorem (see \cite[Proposition~19]{Azum 72} or \cite[Theorem~33]{Azum 51}). In this approach, we define $Z$ to be the set of integer multiples of $e$ (so $S$ is a $Z$-algebra) and use Azumaya's theorem to deduce that there exists a separable $Z$-algebra $T$ with $T\subseteq S$ such that $S=T+J(S)$. We may show that $J(T)=pT$ by observing that $pT$ is a nilpotent ideal, and that $T/pT$ is a separable (and hence semisimple) $Z/pZ$-algebra. Since $J(S)\subseteq J(R)$, we see that $R=S+J(R)=T+J(R)$ and so $S=T$ by the minimality of $S$. Since $J(T)=pT$ we see that~(iii) follows. \\

We require some extra information about the subring $S$ in the theorem above. First, a result due to Clark gives much more information about the structure of $S$. Recall that, for integers $k$ and $r$, the \emph{Galois ring} $\mathrm{GR}(p^k,r)$ may be defined by
\[
\mathrm{GR}(p^k,r)=\mathbb{Z}_{p^k}[x]/(f(x)),
\]
where $\mathbb{Z}_{p^k}$ denotes the integers modulo $p^k$, and where $f(x)\in \mathbb{Z}_{p^k}[x]$ is a monic polynomial of degree $r$ which is irreducible modulo $p$. Note that (see~\cite[Section~3]{Raghavendran} for example) the isomorphism class of the ring $\mathrm{GR}(p^k,r)$ is determined by $k$, $r$ and $p$. Note also that $\mathrm{GR}(p,r)\cong \mathbb{F}_{p^r}$.

\begin{Lem}
\label{S_structure}
Let $S$ be a finite ring of $p$-power order with an identity. Then $S$ is a direct sum of full matrix rings over Galois rings if and only if $J(S) = pS$.
\end{Lem}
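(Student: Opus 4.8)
The plan is to prove the two implications separately, disposing of the easy one first. For the reverse implication, suppose $S=\bigoplus_i M_{n_i}(\mathrm{GR}(p^{k_i},r_i))$. I would use three standard facts: the Jacobson radical of a finite direct sum of rings is the direct sum of the radicals; $J(M_n(A))=M_n(J(A))$ for any ring $A$ with identity; and the Galois ring $\mathrm{GR}(p^k,r)$ is a local ring whose radical is $p\,\mathrm{GR}(p^k,r)$, since the quotient by $p\,\mathrm{GR}(p^k,r)$ is the field $\mathbb{F}_{p^r}$ while $p\,\mathrm{GR}(p^k,r)$ is nilpotent. Combining these gives $J(M_{n_i}(\mathrm{GR}(p^{k_i},r_i)))=M_{n_i}(p\,\mathrm{GR}(p^{k_i},r_i))=p\,M_{n_i}(\mathrm{GR}(p^{k_i},r_i))$, and summing over $i$ yields $J(S)=pS$.

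The substance is the forward implication. Assume $J(S)=pS$. Since $S$ is finite, $J(S)$ is nilpotent, so $(pS)^m=0$ for some $m$; as $S$ has an identity this gives $p^mS=0$, so $S$ has characteristic $p^k$ for some $k$. The quotient $\overline S=S/pS$ is therefore a semisimple $\mathbb{F}_p$-algebra, and by the Wedderburn--Artin theorem together with Wedderburn's theorem that finite division rings are fields, $\overline S\cong\prod_i M_{n_i}(\mathbb{F}_{p^{r_i}})$. The first main step is to lift this decomposition through the nilpotent ideal $pS$. I would lift the full block-diagonal system of matrix units of $\overline S$ to a system of matrix units in $S$, which is possible because $pS$ is nil; writing $e^{(i)}$ for the lifted identity of the $i$-th block, the key observation is that the cross terms vanish: for $i\ne j$ we have $e^{(i)}Se^{(j)}\subseteq pS$, and feeding $x=e^{(i)}xe^{(j)}=p\cdot e^{(i)}ye^{(j)}$ back into itself shows $e^{(i)}Se^{(j)}=p^m e^{(i)}Se^{(j)}=0$. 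Hence the $e^{(i)}$ are central and $S=\bigoplus_i e^{(i)}Se^{(i)}\cong\bigoplus_i M_{n_i}(D_i)$, where $D_i=e^{(i)}_{11}Se^{(i)}_{11}$. Using $J(eSe)=eJ(S)e$, each corner ring $D_i$ is a finite local ring with $J(D_i)=pD_i$ and residue field $\mathbb{F}_{p^{r_i}}$.

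It then remains to identify each local corner ring $D=D_i$ with a Galois ring; this is the crux. I would pick a monic $g\in\mathbb{Z}_{p^k}[x]$ of degree $r=r_i$ that is irreducible modulo $p$ and whose reduction is the minimal polynomial of a generator of $\mathbb{F}_{p^r}=D/pD$. Because this reduction is separable and $pD$ is nilpotent, a Newton/Hensel iteration carried out inside the commutative subring generated by the central prime ring $\mathbb{Z}_{p^k}\cdot 1$ and a single lift of the generator produces an element $\theta\in D$ with $g(\theta)=0$ reducing to the chosen generator. The subring $A=\mathbb{Z}_{p^k}[\theta]$ is then a homomorphic image of $\mathrm{GR}(p^k,r)=\mathbb{Z}_{p^k}[x]/(g)$ that surjects onto $D/pD$; since $p$ is nilpotent in $A$ we have $pA\subseteq J(A)$, and Nakayama's lemma applied to the finitely generated $A$-module $D/A$ forces $A=D$. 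A comparison of orders (the associated graded of $D$ for its $pD$-adic filtration has all of its first $k$ layers isomorphic to $\mathbb{F}_{p^r}$, so $|D|=p^{rk}=|\mathrm{GR}(p^k,r)|$) shows the surjection $\mathrm{GR}(p^k,r)\to D$ is an isomorphism. Thus $D\cong\mathrm{GR}(p^k,r)$ and $S\cong\bigoplus_i M_{n_i}(\mathrm{GR}(p^{k_i},r_i))$.

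I expect the main obstacle to be the identification of the local corner ring with the Galois ring: one must run the Hensel lifting without assuming that $D$ is commutative a priori, by working only inside a commutative subring generated by central elements and a single lifted generator, and then deploy Nakayama's lemma together with the order comparison to pin down $D$ exactly rather than merely up to a surjection. The lifting of the matrix units and the vanishing of the cross terms, by contrast, are routine once the nilpotence of $pS$ and the relation $p^mS=0$ are in hand.
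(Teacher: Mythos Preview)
The paper does not prove this lemma at all; it simply cites Clark~\cite[Lemma~3]{Clark}. Your proposal, by contrast, supplies a self-contained argument, and it is essentially correct: the reverse implication is routine, and for the forward implication your strategy---lift a complete system of matrix units through the nil ideal $pS$, kill the off-diagonal Peirce components by iterating $e^{(i)}Se^{(j)}\subseteq p\cdot e^{(i)}Se^{(j)}$ until $p^m$ annihilates everything, reduce to the local corner rings $D_i$, and then identify each $D_i$ with a Galois ring via a Hensel lift followed by Nakayama and an order count---is the standard route and indeed is essentially Clark's own argument. So what you have written is an unpacking of the citation rather than a genuinely different method.

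One small point to tighten: you fix a single $k$ with $\mathrm{char}(S)=p^k$ and then aim for $D\cong\mathrm{GR}(p^k,r)$, but the characteristic $p^{k_i}$ of an individual corner ring $D_i$ can be strictly smaller than $p^k$ (e.g.\ $S=\mathbb{Z}/p^2\mathbb{Z}\oplus\mathbb{Z}/p\mathbb{Z}$), so the order count $|D|=p^{rk}$ and the claimed isomorphism $\mathrm{GR}(p^k,r)\to D$ must be carried out with $k_i$ in place of $k$; otherwise the map has nontrivial kernel $p^{k_i}\mathrm{GR}(p^k,r)$. This is purely notational and does not affect the logic. It would also be worth making explicit that the commutative subring $B=\mathbb{Z}[a]$ generated by your initial lift $a$ is itself finite and local with $J(B)=B\cap pD$, so that Hensel's lemma genuinely applies inside~$B$.
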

\begin{proof}
See Clark~\cite[Lemma~3]{Clark}.
\end{proof}

\begin{Cor}
\label{S_count}
The number of isomorphism classes of rings $S$ that are the coefficient ring of some finite non-nilpotent ring of cardinality $p^n$ is at most $n\, 2^{3n}$. In particular, there are $p^{\Oh(n)}$ choices for the isomorphism class of $S$.
\end{Cor}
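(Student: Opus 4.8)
The plan is to combine the structural description of coefficient rings supplied by Theorem~\ref{S_exist} and Lemma~\ref{S_structure} with a crude combinatorial encoding. First I would record the basic constraints on $S$. Since $S$ is a subring of a ring $R$ of cardinality $p^n$, its additive group is a $p$-group, so $|S|=p^m$ for some integer $m$ with $1\leq m\leq n$ (the ring $S$ being non-trivial because $R/J(R)$ is a non-zero ring with identity). By Theorem~\ref{S_exist}(ii) the ring $S$ has an identity, and by Theorem~\ref{S_exist}(iii) we have $J(S)=pS$, so Lemma~\ref{S_structure} applies and yields an isomorphism
\[
S\cong\bigoplus_{i=1}^{\ell}M_{d_i}\bigl(\mathrm{GR}(p^{k_i},r_i)\bigr)
\]
for some positive integers $k_i,r_i,d_i$. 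As $|\mathrm{GR}(p^{k},r)|=p^{kr}$, the $i$-th summand has cardinality $p^{k_ir_id_i^2}$, whence $m=\sum_{i=1}^{\ell}k_ir_id_i^2\leq n$.

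Next I would pass from rings to combinatorial data. Because the isomorphism class of $\mathrm{GR}(p^{k},r)$ depends only on $p$, $k$ and $r$, once we fix such a decomposition the isomorphism class of $S$ is determined by the multiset of triples $\{(k_i,r_i,d_i):1\leq i\leq\ell\}$: two rings arising from the same multiset are isomorphic. So it suffices to bound the number of multisets of triples that can occur. Here the key elementary estimate is that, since $k_i,r_i,d_i\geq1$, each of $k_i$, $r_i$ and $d_i^2$ is at most $k_ir_id_i^2$; summing over $i$ and using $d_i\leq d_i^2$ gives
\[
\sum_{i=1}^{\ell}k_i\leq n,\qquad \sum_{i=1}^{\ell}r_i\leq n,\qquad \sum_{i=1}^{\ell}d_i\leq\sum_{i=1}^{\ell}d_i^2\leq n.
\]

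Finally I would count. Listing the $\ell$ summands in a fixed canonical order (say nondecreasing lexicographic order of the triples $(k_i,r_i,d_i)$) produces three sequences $(k_i)_i$, $(r_i)_i$, $(d_i)_i$, each a composition into positive parts of an integer not exceeding $n$. Since the number of compositions of the integers $1,2,\ldots,n$ into positive parts is at most $2^n$, each of the three sequences ranges over a set of size at most $2^n$. The multiset of triples is recovered from the three sequences by reading off the entries in corresponding positions, so the map sending the isomorphism class of $S$ to the triple of sequences is injective; hence the number of isomorphism classes of coefficient rings is at most $2^n\cdot2^n\cdot2^n=2^{3n}\leq n\,2^{3n}$, which also gives the stated $p^{\Oh(n)}$ bound.

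The only point that needs care — and hence the main obstacle — is the injectivity of this encoding. I must fix a canonical ordering of the summands so that the three separately listed sequences can be unambiguously recombined (``zipped'') into the correct multiset of triples, and I must be sure I am counting \emph{multisets} rather than ordered tuples, so that Lemma~\ref{S_structure} genuinely pins down the isomorphism class of $S$ (note that uniqueness of the decomposition is not needed: one simply fixes one decomposition per isomorphism class). Once this is set up, the numerical bound is immediate and sits comfortably inside $n\,2^{3n}$, the surplus factor of $n$ being harmless slack.
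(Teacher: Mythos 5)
Your proposal is correct and follows essentially the same route as the paper: both invoke Theorem~\ref{S_exist} and Lemma~\ref{S_structure} to write $S$ as a direct sum of matrix rings over Galois rings, note that $\sum k_i$, $\sum r_i$ and $\sum m_i$ are each at most $n$, and then bound the data by three compositions, giving roughly $2^{3n}$. The only cosmetic difference is that the paper first fixes $|S|=p^s$ (contributing the factor $n$) and pads each sequence to an ordered partition of $s+1$, whereas you count compositions of all integers up to $n$ directly; both yield the stated bound.
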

\begin{proof}
We have $|S|=p^s$, where $1\leq s\leq n$, and so there are at most $n$ choices for $s$. Suppose $s$ is fixed. Theorem~\ref{S_exist} and Lemma~\ref{S_structure} together imply that $S$ is a direct sum of full matrix rings over Galois rings:
\[
S\cong S_1\oplus S_2\oplus\cdots \oplus S_t,\text{ where }S_i=M_{m_i\times m_i}(\mathrm{GR}(p^{k_i},r_i)),
\]
for some positive integers $t$, $m_i$, $k_i$ and $r_i$. Clearly the isomorphism class of $S$ is determined by the choice of these integers. Now,
$|S_i|=p^{s_i}$ where $s_i=m_i^2k_ir_i$. Since $S$ is a direct sum of the rings $S_i$, we see that
\[
s=\sum_{i=1}^t s_i\geq \sum_{i=1}^t k_i.
\]
In particular, $(k_1,k_2,\ldots,k_t,s+1-\sum_{i=1}^tk_i)$ is an ordered (positive) integer partition of $s+1$. There are $2^{s}$ ordered integer partitions of $s+1$, and so there are at most $2^s$ choices for the integers $t$ and $k_1,k_2,\ldots ,k_t$. Similarly, there are at most $2^s$ choices for the integers $r_1,r_2,\ldots ,r_t$ and, since
\[
s\geq \sum_{i=1}^t m_i^2\geq \sum_{i=1}^t m_i,
\]
there are at most $2^s$ choices for the integers $m_1,m_2,\ldots ,m_t$. Hence the number of choices for the isomorphism class of $S$ is at most
\[
n(2^{s})^3\leq n\, 2^{3n},
\]
as required.
\end{proof}

\begin{Thm}
\label{module_count}
Let $S$ be a non-trivial direct sum of full matrix rings over Galois rings. Suppose that $|S|\leq p^n$. The number of isomorphism classes of (not necessarily unital) left $S$-modules of cardinality at most $p^n$ is at most $p^{2n^2+3n+1}=p^{\Oh(n^2)}$.
The same statement holds for right $S$-modules.
\end{Thm}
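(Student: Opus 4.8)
The plan is to reduce an arbitrary (not necessarily unital) module to a unital one, and then to exploit the concrete structure of $S$ to count unital modules. Since $S$ has an identity $e$, for any left $S$-module $M$ the map $m\mapsto em$ is an idempotent additive endomorphism of $M$, so $M=M_0\oplus M_1$ as abelian groups, where $M_1=eM$ and $M_0=\{m\in M:em=0\}$. Both summands are $S$-submodules: for $m\in M_0$ and $s\in S$ we have $sm=(se)m=s(em)=0$ (using $se=s$), so $S$ annihilates $M_0$, while $M_1$ is unital by construction. Both are canonical (the annihilated part and its image under $e$), so the isomorphism class of $M$ is determined by that of $M_0$ as an abelian group together with that of $M_1$ as a unital module, and $|M_0|,|M_1|\le p^n$. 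As the number of abelian $p$-groups of order at most $p^n$ is at most $2^n\le p^n$, it suffices to bound the number of unital left $S$-modules of order at most $p^n$ by $p^{\Oh(n^2)}$.

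For the unital count I would use the Peirce (Morita) decomposition. Writing $S=\bigoplus_{i=1}^t S_i$ with $S_i=M_{m_i\times m_i}(\mathrm{GR}(p^{k_i},r_i))$, the central idempotents split a unital module as $M=\bigoplus_i e_iM$, and within each block the matrix units $E^{(i)}_{ab}$ identify $e_iM$ with $m_i$ copies of $V_i:=E^{(i)}_{11}e_iM$, a module over $E^{(i)}_{11}S_iE^{(i)}_{11}\cong \mathrm{GR}(p^{k_i},r_i)=:G_i$. The isomorphism class of the unital module $M$ is then determined by the tuple of isomorphism classes of the $G_i$-modules $V_i$, and $\log_p|M|=\sum_i m_i\log_p|V_i|$.

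It remains to count isomorphism classes of $G_i$-modules crudely. Putting $v_i=\log_p|V_i|$, the constraint $|M|\le p^n$ gives $\sum_i m_iv_i\le n$, hence $\sum_i v_i\le n$ since each $m_i\ge 1$. A $G_i$-module of order $p^{v_i}$ is an abelian $p$-group $A$ (at most $2^{v_i}$ choices) together with a unital ring homomorphism $G_i\to\mathrm{End}(A)$; since $G_i$ is generated as a unital ring by a single element, such a homomorphism is determined by one element of $\mathrm{End}(A)$, giving at most $|\mathrm{End}(A)|\le p^{v_i^2}$ choices. Multiplying over $i$ and using $\sum_i v_i^2\le(\sum_i v_i)^2\le n^2$, each admissible tuple $(v_i)$ contributes at most $2^{n}p^{n^2}$; as there are at most $\binom{2n}{n}<4^n$ admissible tuples, the number of unital modules is at most $p^{n^2+\Oh(n)}$. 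Combined with the abelian-group factor this yields the claimed $p^{\Oh(n^2)}$ bound, comfortably inside $p^{2n^2+3n+1}$. The right-module statement follows because $S^{\mathrm{op}}\cong S$: each Galois ring is commutative, so transposition gives $M_{m\times m}(G)^{\mathrm{op}}\cong M_{m\times m}(G)$, and right $S$-modules are left modules over a ring of the same form.

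The main obstacle is the exponent. A naive count that specifies an $S$-action as a ring homomorphism $S\to\mathrm{End}(M)$ on $\Theta(n)$ generators, each admitting up to $p^{n^2}$ images, gives only $p^{\Oh(n^3)}$. The decomposition above is exactly what removes this: it replaces the global action by $t$ local actions on the small modules $V_i$, each requiring only a single ring generator, while the inequality $\sum_i v_i^2\le(\sum_i v_i)^2\le n^2$ keeps the total logarithmic size of the relevant endomorphism rings quadratic rather than cubic in $n$. (Invoking the finite-chain-ring structure of $G_i$ would cut the $G_i$-module count to $p^{\Oh(v_i)}$ and improve the final bound to $p^{\Oh(n)}$, but this refinement is not needed here.)
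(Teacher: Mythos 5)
Your proof is correct, and its outer skeleton coincides with the paper's: both split off the part of $M$ annihilated by the identity (your $M_0$ is the paper's $V_0=\ker(v\mapsto 1v)$), decompose the unital part by the central idempotents $e_i$, and then count block by block, controlling the total via $\sum_i v_i\leq n$ and $\sum_i v_i^2\leq n^2$. The difference lies in the block-level count. The paper treats each block $e_iSe_i\cong M_{m_i\times m_i}(\mathrm{GR}(p^{k_i},r_i))$ directly, observing that this matrix ring is $2$-generated (by $\zeta E_{1,1}$ and a cyclic permutation matrix), so that the module $e_iM$ is pinned down by its abelian group structure and two endomorphisms, giving $p^{2v_i^2+v_i}$ per block and the stated bound $p^{2n^2+3n+1}$. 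You instead apply a Morita/Peirce reduction, replacing $e_iM$ by the $\mathrm{GR}(p^{k_i},r_i)$-module $E^{(i)}_{11}e_iM$; since a Galois ring is $1$-generated as a unital ring, each block then costs only about $p^{v_i^2}$, and you land at $p^{n^2+\Oh(n)}$, strictly inside the paper's bound. What your route buys is a sharper constant in the exponent (and, as you note, the chain-ring structure of Galois rings would cut this further to $p^{\Oh(n)}$); what the paper's route buys is the avoidance of Morita theory in favour of an elementary generation argument, which is all that is needed since only $p^{\Oh(n^2)}$ matters downstream. Your argument for the right-module case via $S^{\mathrm{op}}\cong S$ (transposition, commutativity of Galois rings) is also fine and slightly more explicit than the paper's remark that the proof is identical.
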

\begin{proof}
We prove the theorem for left-modules. The proof for right-modules is identical (or we can observe that a matrix ring $S$ is isomorphic to its opposite ring, so there is a canonical isomorphism between left and right $S$-modules).

We begin with considering the special case when $S=M_{m\times m}(\mathrm{GR}(p^{k},r))$ for positive integers $m$, $k$ and $r$.

We claim that $S$ is $2$-generated (as a ring) in this special case. To see this, first note that $\mathrm{GR}(p^{k},r)$ is $1$-generated: any element $\zeta\in \mathrm{GR}(p^{k},r)$ whose image in the natural map onto the finite field $\mathrm{GR}(p,r)$ is primitive (or, more generally, lies in no proper subfields) will generate $\mathrm{GR}(p^{k},r)$. We then note that, writing $E_{i,j}\in S$ for the matrix with $(i,j)$ entry $1$ and all other entries~$0$, the ring $S$ is generated by $g_1$ and $g_2$ where $g_1=\zeta E_{1,1}$ and where $g_2$ is the cyclic permutation matrix defined by
\[
g_2=E_{1,2}+E_{2,3}+\cdots+E_{m-2,m-1}+E_{m-1,m}+E_{m,1}.
\]
This establishes our claim.

A (not necessarily unital) left $S$-module $V$ of cardinality at most $p^n$ is determined by its structure as an abelian group, together with the pair of maps in $\mathrm{End}_\mathbb{Z}(V)$ induced by the left action of each of $g_1$ and $g_2$ on $V$. (Here, $\mathrm{End}_\mathbb{Z}(V)$ is the set of abelian group homomorphisms from $V$ to itself.) Now, as an abelian group we find that
\[
V\cong\mathbb{Z}_{p^{a_1}}\oplus \mathbb{Z}_{p^{a_2}}\oplus \cdots \oplus \mathbb{Z}_{p^{a_r}},
\]
for some positive integers $a_1,a_2,\ldots a_r$ where $\sum_{i=1}^ra_i\leq n$. In particular, the sequence $(a_1,a_2,\ldots,a_r,n+1-\sum_{i=1}^ra_i)$ is an ordered partition of $n+1$ into positive integers, and so there are at most $2^n$ choices for these integers. Since any element of $\mathrm{End}_\mathbb{Z}(V)$ is specified by the images of $r$ generators for $V$, we see that
\[
|\mathrm{End}_\mathbb{Z}(V)|\leq |V|^r\leq (p^n)^n=p^{n^2},
\]
so there are at most $p^{n^2}$ choices for the action of each of $g_1$ and $g_2$ on $V$. Hence the number of left $S$-modules is at most $p^{2n^2+n}$, and the theorem follows in this case.

We now consider the general case, so
\begin{equation}
\label{Ssum}
S=S_1\oplus S_2\oplus\cdots\oplus S_t\text{ where }S_i=M_{m_i\times m_i}(\mathrm{GR}(p^{k_i},r_i))
\end{equation}
for some positive integers $t$, $m_i$, $k_i$ and $r_i$. Note that, since $|S|\leq p^n$, we have~$t\leq n$. 

Let $e_i\in S$ be the identity matrix in $S_i$, so
\[
e_ie_j=\begin{cases} 0&\text{ when }i\not=j,\\
e_i&\text{ when }i=j.
\end{cases}
\]
The identity element $1$ of $S$ is $\sum_{i=1}^t e_i$. The subring $e_iSe_i$ is the $i$th ring in the sum~\eqref{Ssum}, and so is isomorphic to $S_i$.

Let $V$ be a left $S$-module of cardinality at most $p^n$. Setting $V_0$ to be the kernel of the map $v\mapsto 1v$ on $V$, there is an additive decomposition of $V$ of the form
\[
V=V_0\oplus 1V=V_0\oplus e_1V\oplus e_2V\oplus\cdots\oplus e_tV.
\]
For $i\in\{1,2,\ldots,t\}$, write $V_i=e_iV$. For $i\in\{0,1,\ldots,t\}$, define the non-negative integer $v_i$ by $|V_i|=p^{v_i}$. Since $\sum_{i=0}^t v_i\leq n$, and since $t\leq n$, we see that $\sum_{i=0}^t (v_i+1)\leq 2n+1$. Hence the sequence $(v_0+1,v_1+1,\ldots ,v_t+1,2n+2-\sum_{i=0}^t)$ is an ordered integer partition of $2n+2$, and so there are at most $2^{2n+1}$ possibilities for the integers $v_i$.

Since $S$ acts trivially on $V_0$, the isomorphism class of the module $V_0$ is entirely determined by its abelian group structure. Since $|V|=p^{v_0}$, the argument used in the special case above shows that there are at most $2^{v_0}$ possibilities for $V_0$ once $v_0$ is fixed.

Let $i\in\{1,2,\ldots,t\}$ be fixed. When $i\not=j$, the subring $e_jSe_j$ of $S$ acts trivially on $V_i$. So $V_i$ is entirely determined as a left $S$-module by its left $e_iSe_i$-module structure. Since $e_iSe_i\cong S_i$, the special case of the theorem we have already established shows that there are at most $p^{2v_i^2+v_i}$ possibilities for the isomorphism class of $V_i$. 

So the number of isomorphism classes of left $S$-modules $V$ is at most
\[
2^{2n+1}2^{v_0}\prod_{i=1}^t p^{2v_i^2+v_i}\leq p^{2n^2+3n+1},
\]
since $\sum_{i=0}^tv_i\leq n$, and since $2\leq p$. Hence the theorem follows.
\end{proof}

\begin{Thm} \label{extension}
Let $r$ and $n$ be integers such that $0 \leq r \leq n$. Let $J$ be a nilpotent ring of cardinality $p^r$. The number of isomorphism classes of rings $R$ of cardinality $p^n$ with $J(R)\cong J$ is at most $n\, p^{7n^2+9n+2}=p^{\Oh(n^2)}$.
\end{Thm}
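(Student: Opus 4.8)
The plan is to fix the nilpotent ring $J$ of order $p^r$ and to reconstruct each $R$ with $J(R)\cong J$ from a bounded amount of data. First I would dispose of a degenerate case: since $J(R)$ is nilpotent and $R$ is finite, $J(R)=R$ holds exactly when $R$ is nilpotent, and this forces $r=n$ and $R\cong J$, giving at most one isomorphism class. Hence if $r=n$ the bound is immediate, while if $r<n$ every $R$ with $J(R)\cong J$ is non-nilpotent. So I would assume $R$ non-nilpotent and apply Theorem~\ref{S_exist} to obtain a coefficient ring $S$, a subring with identity satisfying $S+J(R)=R$, $J(S)=pS$ and $S\cap J(R)=pS$. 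By Lemma~\ref{S_structure} and Corollary~\ref{S_count} there are at most $n\,2^{3n}\le n\,p^{3n}$ choices for the isomorphism class of $S$, and I fix a representative of each class.

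The heart of the argument is to reconstruct $R$ from $S$, from $J$, and from the way $S$ multiplies with $J(R)$. Fix a ring isomorphism $\theta\colon J(R)\to J$. I would record the \emph{left action} $\lambda\colon S\to\mathrm{End}(J^{+})$, $\lambda(s)=\theta\circ(s\cdot-)\circ\theta^{-1}$, which makes $J^{+}$ a left $S$-module; the analogous \emph{right action} $\rho$; and the additive map $\iota\colon pS\to J$, $\iota(x)=\theta(x)$, which records how $S\cap J(R)=pS$ sits inside $J(R)$. Writing each element of $R$ as $s+\eta$ with $s\in S$ and $\eta\in J(R)$ (non-uniquely, the ambiguity being precisely $pS$), a direct check using $\theta$ shows that $s+\eta\mapsto(s,\theta\eta)+K$ is a ring isomorphism
\[
R\;\cong\;(S\oplus J)\big/K,\qquad K=\{(x,-\iota(x)):x\in pS\},
\]
where multiplication on $S\oplus J$ is $(s_1,j_1)(s_2,j_2)=(s_1s_2,\ \lambda(s_1)(j_2)+\rho(s_2)(j_1)+j_1j_2)$. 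The four parts of the product $(s_1+\eta_1)(s_2+\eta_2)$ are thereby recovered: $s_1s_2$ from the ring $S$, the two mixed terms from $\lambda$ and $\rho$, and $\eta_1\eta_2$ from the fixed ring $J$. As the right-hand side depends only on the tuple $(S,\lambda,\rho,\iota)$, the number of isomorphism classes of $R$ is at most the number of such tuples.

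It remains to count the tuples, and here I must count module \emph{structures} on the \emph{fixed} group $J^{+}$, not merely isomorphism classes. Two structures give isomorphic modules exactly when they are conjugate under $\mathrm{Aut}(J^{+})$, so the number of left $S$-module structures on $J^{+}$ is at most the number of isomorphism classes of left $S$-modules of order at most $p^{n}$ times $|\mathrm{Aut}(J^{+})|$. Theorem~\ref{module_count} bounds the first factor by $p^{2n^{2}+3n+1}$, and since $|\mathrm{Aut}(J^{+})|\le|\mathrm{End}(J^{+})|\le p^{r^{2}}\le p^{n^{2}}$ (using $\sum_{i,j}\min(a_i,a_j)\le r^{2}$ for the type $(a_i)$ of $J^{+}$), there are at most $p^{3n^{2}+3n+1}$ choices for $\lambda$, and likewise for $\rho$. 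Finally $\iota$ is an additive map from $pS$, a group of rank at most $n$, into $J$, so there are at most $|J|^{n}\le p^{n^{2}}$ choices for it. Multiplying the four counts yields $n\,p^{3n}\cdot p^{3n^{2}+3n+1}\cdot p^{3n^{2}+3n+1}\cdot p^{n^{2}}=n\,p^{7n^{2}+9n+2}$, as required.

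The step I expect to be the main obstacle is the reconstruction itself: one must check that the non-unique decomposition $R=S+J(R)$, with its overlap $pS$, is faithfully encoded by the quotient by $K$, and that the displayed multiplication genuinely returns the original product (in particular that the compatibility $\lambda(s)(\iota(x))=\iota(sx)$ and its right-hand analogue hold). The second delicate point is quantitative, namely that the reconstruction requires the module \emph{structures} on a single fixed copy of $J^{+}$; this is exactly why the factor $|\mathrm{Aut}(J^{+})|\le p^{n^{2}}$ (and not Theorem~\ref{module_count} alone) must enter, making each of the two module counts contribute $3n^{2}$ rather than $2n^{2}$.
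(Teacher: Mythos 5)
Your proposal is correct and follows essentially the same route as the paper: reduce to the non-nilpotent case, invoke Theorem~\ref{S_exist} and Corollary~\ref{S_count} for the coefficient ring, record the two $S$-module actions on $J$ and the embedding of $pS$, and reconstruct $R$ from this data before multiplying the counts. Your packaging of the data (module structures on the fixed group $J^{+}$, bounded by Theorem~\ref{module_count} times $\lvert\mathrm{Aut}(J^{+})\rvert$, and the quotient $(S\oplus J)/K$) is just an equivalent bookkeeping of the paper's sextuple $(S,\psi,V,\phi,V',\phi')$ and its formal-sum construction, and it yields the identical bound $n\,p^{7n^{2}+9n+2}$.
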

\begin{proof}
The theorem is clearly true when $r=n$, since $R\cong J$ in this case. So we may assume that $r<n$.

Let $R$ be a ring of cardinality $R$ with $J(R)\cong J$. By replacing $R$ by a suitable isomorphic copy, we may assume that $J(R)=J$. We aim to show that the isomorphism class of the ring $R$ is determined by a certain sextuple of algebraic structures. Indeed, we aim to show that the isomorphism class of $R$ is determined by: the isomorphism class of a coefficient ring $S$; the isomorphism classes of left and right $S$-modules corresponding to left and right multiplication of $S$ on $J$; two abelian group isomorphisms that identify the underlying sets of these $S$-modules with $J$; an abelian group isomorphism that determines how $pS$ embeds in $J$. We will then proceed to count the number of possibilities for these structures.

Let $\mathcal{S}$ be a set of representatives for isomorphism classes of coefficient rings for rings of order $p^n$. For each ring $S\in\mathcal{S}$, choose a fixed set $C_S$ consisting of one representative from each coset of $pS$ in $S$.

Choose a coefficient ring $U$ for $R$, and let  $S\in\mathcal{S}$ be isomorphic to $U$. So there exists an isomorphism $\theta:S\rightarrow U$. 
By Theorem~\ref{S_exist}(iv), we know that $pU\subseteq J$. So the restriction of $\theta$ to $pS$ is an injective abelian group homomorphism $\psi:pS\rightarrow J$.

Theorem~\ref{S_exist}(i) and~(iv) implies that every element of $R$ may be uniquely written in the form $\theta(s)+x$ where $s\in C_S$ is one of the coset representatives for $pS$ in $S$ chosen above and where $x\in J$.

Let $\mathcal{V}_S$ and $\mathcal{V}'_S$ be sets of representatives for the isomorphism classes of, respectively, all left $S$-modules of cardinality $p^r$ and all right $S$-modules of cardinality $p^r$. Since $J$ is a left ideal in $R$, left multiplication by $U$ makes $J$ into a left $U$-module. Using the isomorphism $\theta$ between $S$ and $U$, we see that $J$ is a left $S$-module. Let $V\in \mathcal{V}_S$ be isomorphic to the left $S$-module that arises in this way, and let $\phi:V\rightarrow J$ be the induced module isomorphism (which, in particular, is an isomorphism of abelian groups). Similarly, right multiplication gives rise to a right $S$-module $V'\in\mathcal{V}'_S$ and an isomorphism $\phi':V'\rightarrow J$. For all $s\in S$, $v\in V$ and $v'\in V'$,
\[
sv=\theta(s) \phi(v)\text{ and }v's=\phi'(v')\theta(s).
\]

We have shown that each ring $R$ gives rise to (at least one) sextuple $(S,\psi,V,\phi,V',\phi')$ where $S\in \mathcal{S}$, $V\in \mathcal{V}_S$ and $V'\in\mathcal{V}'_S$, where $\psi:pS\rightarrow J$ is an injective group homomorphism, and where the maps $\phi:V\rightarrow J$ and $\phi':V'\rightarrow J$ are isomorphisms of abelian groups.

We may form a ring $T$ that is isomorphic to $R$ by taking all formal sums $s+x$ with $s\in C_S$ and $x\in J$ as our underlying set, and defining addition and multiplication as follows. Let $s_1,s_2\in C_S$. Now, $s_1+s_2=s+y$ for some $s\in C_S$ and $y\in pS$, and $s_1s_2=s'+y'$ for some $s'\in C_S$ and $y'\in pS$. We define the sum of elements $s_1+x_1$ and $s_2+x_2$ in $T$ to be
\[
s+\psi(y)+x_1+x_2,
\]
and the product of these elements to be
\[
s'+\psi(y')+\phi(s_1\,\phi^{-1}(x_2))+\phi'(\phi'^{-1}(x_1)\, s_2)+x_1x_2.
\]
We see that $T\cong R$, via the isomorphism that maps $s+x$ to $\theta(s)+x$ for all $s\in C_S$ and $x\in J$. Since $T$ is defined only using the sextuple $(S,\psi,V,\phi,V',\phi')$, this information is sufficient to determine the isomorphism class of $R$.

It remains to count the number of possibilities for $(S,\psi,V,\phi,V',\phi')$. There are at most $n2^{3n}$ possibilities for $S$, by Corollary~\ref{S_count}. Once $S$ is fixed, there are at most $p^{2n^2+3n+1}$ possibilities for each of the $S$-modules $V$ and $V'$, by Theorem~\ref{module_count}. The functions $\psi$, $\phi$ and $\phi'$ are all abelian group homomorphisms between $p$-primary groups of order at most $p^n$. Such groups are generated by at most $n$ elements, and a homomorphism is determined by the images of a generating set, so the number of choices for each of $\psi$, $\phi$ and $\phi'$ is at most $(p^n)^n=p^{n^2}$. Hence the number of possibilities for our sextuple is at most 
\[
n2^{3n}(p^{2n^2+3n+1})^2(p^{n^2})^3\leq n\, p^{7n^2+9n+2}=p^{\Oh(n^2)}.
\]
Each sextuple is associated with at most one isomorphism class of rings $R$, and every ring $R$ is associated with at least one sextuple. So the theorem follows.
\end{proof}

\begin{Thm}
\label{general bound}
The number of isomorphism classes of rings of cardinality $p^{n}$ is $p^{\delta}$ where $\delta = \frac{4}{27}n^{3} + \Oh(n^{5/2})$.
\end{Thm}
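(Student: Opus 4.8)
The plan is to partition the isomorphism classes of rings $R$ of cardinality $p^n$ according to whether $R$ is nilpotent, and to bound the two parts using the results already assembled. The lower bound is immediate, since every nilpotent ring of order $p^n$ is in particular a ring of order $p^n$: Theorem~\ref{main nilpotent} already produces at least $p^{\tfrac{4}{27}n^3-\Oh(n^{5/2})}$ isomorphism classes (in fact the lower bound there comes from cube-zero $\mathbb{F}_p$-algebras, with the sharper error $\Oh(n^2)$). It therefore suffices to prove the matching upper bound $p^{\tfrac{4}{27}n^3+\Oh(n^{5/2})}$ on the total number of isomorphism classes.

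The nilpotent rings of order $p^n$ contribute at most $p^{\tfrac{4}{27}n^3+\Oh(n^{5/2})}$ classes, once more directly by Theorem~\ref{main nilpotent}, so the task reduces to bounding the non-nilpotent rings. If $R$ is non-nilpotent then $J(R)$ is a proper ideal (as noted in the proof of Theorem~\ref{S_exist}), hence a nilpotent ring of cardinality $p^r$ for some $r$ with $0\leq r\leq n-1$. I would count such $R$ in two stages: first choose the isomorphism class of the radical $J=J(R)$, then count the rings realising it. By Theorem~\ref{main nilpotent} there are at most $p^{\tfrac{4}{27}r^3+\Oh(r^{5/2})}$ choices for a nilpotent ring of order $p^r$, and by Theorem~\ref{extension} each fixed $J$ is the radical of at most $n\,p^{7n^2+9n+2}=p^{\Oh(n^2)}$ isomorphism classes of rings of order $p^n$. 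Summing the product over the at most $n$ values of $r$ bounds the number of non-nilpotent rings.

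The one point requiring care is that the auxiliary counts do not inflate the leading term, and it resolves in our favour. Since $r\leq n-1$ we have $\tfrac{4}{27}r^3\leq \tfrac{4}{27}(n-1)^3=\tfrac{4}{27}n^3-\tfrac{4}{9}n^2+\Oh(n)$, so the factor counting radicals is at most $p^{\tfrac{4}{27}n^3+\Oh(n^{5/2})}$, the negative quadratic term being harmless. The extension factor $p^{\Oh(n^2)}$ is absorbed by the error term because $n^2=\Oh(n^{5/2})$, and the summation over the $\Oh(n)$ values of $r$ multiplies the bound only by $n=p^{\Oh(1)}$. Hence the non-nilpotent rings also number at most $p^{\tfrac{4}{27}n^3+\Oh(n^{5/2})}$, and combining the two parts leaves the exponent unchanged, giving the required upper bound.

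The main obstacle here is not conceptual but bookkeeping: all the structural input — the reduction to the radical and its coefficient ring, the module counts, and the extension count — has already been carried out in Theorems~\ref{main nilpotent}, \ref{S_exist} and~\ref{extension}. The heart of the matter is simply verifying that each of these contributions, in particular the $\Oh(n^2)$ extension term and the factor of $n$ from summing over $r$, is swallowed by the $\Oh(n^{5/2})$ slack in the nilpotent count, so that no new idea beyond careful accounting is needed.
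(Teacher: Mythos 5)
Your proposal is correct and follows essentially the same route as the paper: choose the isomorphism class of the nilpotent radical $J(R)$ via Theorem~\ref{main nilpotent}, apply Theorem~\ref{extension} to bound the number of rings with that radical by $p^{\Oh(n^2)}$, and sum over $r$, with the lower bound coming from the cube-zero $\mathbb{F}_p$-algebra count. The only cosmetic difference is that you treat the nilpotent rings as a separate case, whereas the paper folds them into the same sum (the case $r=n$ of Theorem~\ref{extension} being trivial); the bookkeeping is the same either way.
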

\begin{proof} 
Theorem 2.2 of \cite{K & P} shows the number of $\mathbb{F}_{p}$-algebras of cube zero and dimension $n$ is $p^{\frac{4}{27}n^{3} + \Oh(n^2)}$. This provides the lower bound we need. So to prove the theorem it is sufficient to show that there are at most $p^{\frac{4}{27}n^{3} + \Oh(n^{5/2})}$ rings of order $p^{n}$.

The Jacobson radical $J(R)$ of a ring $R$ of cardinality $p^n$ is nilpotent (as $R$ is artinian), and has cardinality $p^r$ for some integer $r$ such that $0\leq r\leq n$. By Theorem~\ref{main nilpotent}, there are at most $p^{\frac{4}{27}r^3+\Oh(r^{5/2})}$ nilpotent rings of cardinality~$p^r$, and so there are at most $p^{\frac{4}{27}r^3+\Oh(r^{5/2})}$ choices for the isomorphism class of $J(R)$. Once the isomorphism class of $J(R)$ is fixed, Theorem~\ref{extension} shows that there are at most $n\, p^{7n^2+9n+2}$ choices for the isomorphism class of $R$. Hence the number of rings of cardinality $p^n$ is at most
\[
\sum_{r=0}^{n} n\, p^{7n^2+9n+2} p^{\frac{4}{27}r^3+\Oh(r^{5/2})} = p^{\frac{4}{27}n^3+\Oh(n^{5/2})},
\]
as required.
\end{proof}

The high-level lesson we might take from the theorem above is that the structure of non-nilpotent rings is very restricted: the leading term of the enumeration function is provided by nilpotent rings. Indeed, it may well be the case (though we are far from having a proof) that the proportion of rings of order $p^n$ that are non-nilpotent tends to $0$ as $n\rightarrow\infty$. The following theorem provides a result in this direction.

\begin{Thm} \label{not very nonnilpotent bound}
Let $f_s(n)$ be the number of isomorphism classes of rings $R$ such that $|R|=p^n$ and $R/J(R)\geq p^s$. 
There exists a positive real number $\sigma$ such that, when we set $s=\sigma \sqrt{n}$,
\[
\lim_{n\rightarrow\infty} f_{s}(n)/\fr(n)=0.
\]
\end{Thm}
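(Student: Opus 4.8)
The plan is to bound $f_s(n)$ by restricting the size of the Jacobson radical and then to exploit the fact that a smaller radical forces a strictly smaller nilpotent count. First I would observe that if $R$ is one of the rings counted by $f_s(n)$, then $|R/J(R)|\geq p^s$ forces $|J(R)|=p^r$ with $r\leq n-s$. The radical $J(R)$ is nilpotent (as $R$ is artinian), so by Theorem~\ref{main nilpotent} the number of choices for the isomorphism class of $J(R)$ is at most $p^{\frac{4}{27}r^3+\Oh(r^{5/2})}$. Once this isomorphism class is fixed, Theorem~\ref{extension} bounds the number of possibilities for $R$ by $p^{\Oh(n^2)}$, uniformly in $r$. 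Summing over $r$ in the range $0\leq r\leq n-s$ and using that $\frac{4}{27}r^3+\Oh(r^{5/2})$ is increasing in $r$ for large $n$, the sum is dominated (up to the polynomial factor $n-s+1$) by its largest term, giving
\[
f_s(n)\leq p^{\frac{4}{27}(n-s)^3+\Oh(n^{5/2})}.
\]

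Next I would expand the leading cube. Writing $\frac{4}{27}(n-s)^3=\frac{4}{27}n^3-\frac{4}{9}n^2s+\Oh(ns^2)$ and substituting $s=\sigma\sqrt{n}$ turns the correction into $-\frac{4}{9}\sigma n^{5/2}+\Oh(n^2)$, so that
\[
f_s(n)\leq p^{\frac{4}{27}n^3-\frac{4}{9}\sigma n^{5/2}+\Oh(n^{5/2})}.
\]
On the other hand, Theorem~\ref{general bound} gives the lower bound $\fr(n)\geq p^{\frac{4}{27}n^3-\Oh(n^{5/2})}$. Dividing, the two cubic terms cancel and we are left with
\[
\frac{f_s(n)}{\fr(n)}\leq p^{-\frac{4}{9}\sigma n^{5/2}+\Oh(n^{5/2})}.
\]

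Finally I would choose $\sigma$. The crucial, and to my mind the only delicate, point is that the saving $-\frac{4}{9}\sigma n^{5/2}$ produced by the radical restriction sits at exactly the same order, $n^{5/2}$, as the error terms coming from Theorem~\ref{main nilpotent} and from the bound in Theorem~\ref{general bound}. Consequently a small $\sigma$ need not suffice: I must take $\sigma$ larger than some absolute constant $C$, namely the implied constant in the combined $\Oh(n^{5/2})$ term above, so that the coefficient $-\frac{4}{9}\sigma+C$ of $n^{5/2}$ in the exponent is strictly negative. With such a $\sigma$ fixed, the exponent tends to $-\infty$ and hence $f_s(n)/\fr(n)\to 0$. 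This is why the statement asserts the existence of a suitable $\sigma$ rather than the conclusion for every $\sigma>0$; making $C$ explicit would require tracking the error terms through Theorems~\ref{main nilpotent} and~\ref{general bound}, which is the one place where genuine care is needed.
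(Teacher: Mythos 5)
Your proposal is correct and follows essentially the same route as the paper: bound $f_s(n)$ by summing the nilpotent count of Theorem~\ref{main nilpotent} over radicals of order at most $p^{n-s}$ times the $p^{\Oh(n^2)}$ extension count of Theorem~\ref{extension}, expand $(n-s)^3$, and choose $\sigma$ so that $\tfrac{4}{9}\sigma$ exceeds the implied constant in the $n^{5/2}$ error term. The only (harmless) difference is that the paper takes its lower bound on $\fr(n)$ directly from Kruse and Price's count of cube-zero algebras, whose error is only $\Oh(n^2)$, so it needs $\tfrac{4}{9}\sigma$ to beat just the constant $\kappa$ from Theorem~\ref{main nilpotent} rather than the combined constant you describe.
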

We note that Kruse and Price~\cite[Theorem~5.10]{K & P} provide a weaker version of this result, setting $s=\varepsilon n$ for some (arbitrary) positive real number $\varepsilon$. 
\begin{proof}
Theorem~\ref{main nilpotent} shows that the number of isomorphism classes of nilpotent rings of cardinality $p^r$ is at most $p^{\frac{4}{27}n^{3} + \kappa n^{5/2}}$ for some positive constant~$\kappa$.
Let $\sigma$ be a positive real number so that $\frac{4}{9}\sigma>\kappa$, and define $s=\sigma\sqrt{n}$. Following the approach of the proof of Theorem~\ref{general bound}, we see that
\[
f_s(n)\leq\sum_{r=0}^{n-\lceil s\rceil} n\, p^{7n^2+9n+2} p^{\frac{4}{27}r^3+\kappa r^{5/2}}=p^\alpha,
\]
where
\[
\alpha\leq \tfrac{4}{27}(n-s)^3+\kappa n^{5/2}+\Oh(n^2)\\
=\tfrac{4}{27}n^3+(\kappa-\tfrac{4}{9}\sigma) n^{5/2}+\Oh(n^2).
\]
Since $\fr(n)\geq p^{\frac{4}{27}n^3+\Oh(n^2)}$ by \cite[Theorem~2.2]{K & P}, we see that
\[
f_s(n)/\fr(n)\leq p^{(\kappa-\frac{4}{9}\sigma)n^{5/2}+\Oh(n^2)}\rightarrow 0
\]
as $n\rightarrow\infty$, since $\frac{4}{9}\sigma>\kappa$.
\end{proof}

We remark that, even though it is possible that most rings are nilpotent, the number of non-nilpotent rings is nevertheless large:

\begin{Thm} \label{nonnilpotent bound}
The number of isomorphism classes of non-nilpotent rings of cardinality $p^{n}$ is $p^{\frac{4}{27}n^{3} + \Oh(n^{5/2})}$.
\end{Thm}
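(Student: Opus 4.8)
The upper bound requires no new work: the non-nilpotent rings form a subclass of all rings of cardinality $p^n$, so Theorem~\ref{general bound} immediately gives at most $p^{\frac{4}{27}n^3+\Oh(n^{5/2})}$ of them. The content of the theorem is therefore the matching lower bound, and the plan is to inject the (numerous) nilpotent rings of slightly smaller order into the class of non-nilpotent rings of order $p^n$.

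Concretely, I would consider the map sending a nilpotent ring $N$ of cardinality $p^{n-1}$ to the direct sum $N\oplus\mathbb{F}_p$, a ring of cardinality $p^n$. This ring is non-nilpotent: the identity element of the summand $\mathbb{F}_p$ is a nonzero idempotent of $N\oplus\mathbb{F}_p$, and a ring possessing a nonzero idempotent cannot be nilpotent. So the construction lands in the required class.

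The key point is that this map is injective on isomorphism classes, and here the Jacobson radical does the work. Since the radical of a direct sum is the direct sum of the radicals, and since $J(N)=N$ (as $N$ is nilpotent) while $J(\mathbb{F}_p)=0$, we have $J(N\oplus\mathbb{F}_p)=N\oplus 0\cong N$. Hence if $N\oplus\mathbb{F}_p\cong N'\oplus\mathbb{F}_p$ then, taking Jacobson radicals, $N\cong N'$. Thus distinct isomorphism classes of nilpotent rings $N$ of order $p^{n-1}$ yield distinct isomorphism classes of non-nilpotent rings of order $p^n$.

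It remains to count. By Theorem~\ref{main nilpotent} the number of nilpotent rings of cardinality $p^{n-1}$ is $p^{\frac{4}{27}(n-1)^3+\Oh((n-1)^{5/2})}$, and since $\frac{4}{27}(n-1)^3=\frac{4}{27}n^3+\Oh(n^2)$ this equals $p^{\frac{4}{27}n^3+\Oh(n^{5/2})}$. Combining this lower bound with the upper bound above gives the stated estimate. I do not expect any genuine obstacle here; the only point requiring a moment's care is the identification of the Jacobson radical of the direct sum, which underpins the injectivity of the construction.
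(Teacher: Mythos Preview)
Your argument is correct and follows exactly the route taken in the paper: the upper bound comes from Theorem~\ref{general bound}, and the lower bound from the direct sums $\mathbb{F}_p\oplus N$ with $N$ nilpotent of order $p^{n-1}$, counted via Theorem~\ref{main nilpotent}. Your justification of injectivity via the Jacobson radical is in fact more explicit than the paper's, which simply asserts that the count of such $N$ gives the required lower bound.
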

\begin{proof}
The upper bound is provided by Theorem~\ref{general bound}. The lower bound may be proved by considering rings that are a direct sum of the form $\mathbb{F}_p\oplus N$, where $N$ is a nilpotent ring of order $p^{n-1}$. There are $p^{\frac{4}{27}n^3+\Oh(n^{5/2})}$ choices for the isomorphism class of $N$, by Theorem~\ref{main nilpotent}, and so the theorem follows.
\end{proof}

Finally we remark that, since the rings constructed for the lower bound all have an identity element, the following theorem holds:
\begin{Thm} \label{ring with one bound}
The number of isomorphism classes of rings with identity that have cardinality~$p^{n}$ is $p^{\frac{4}{27}n^{3} + \Oh(n^{5/2})}$.
\end{Thm}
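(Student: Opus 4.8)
The goal is to prove Theorem~\ref{ring with one bound}: the number of isomorphism classes of rings with identity of cardinality $p^n$ is $p^{\frac{4}{27}n^3+\Oh(n^{5/2})}$. The plan is to establish matching upper and lower bounds, exactly as was done for general rings in Theorem~\ref{general bound}, but restricting attention to unital rings.

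For the upper bound, I would simply observe that every ring with identity is in particular a ring, so the count of unital rings of order $p^n$ is at most the count of all rings of order $p^n$. By Theorem~\ref{general bound}, this total count is $p^{\frac{4}{27}n^3+\Oh(n^{5/2})}$, which immediately gives the desired upper bound with no further work. This is the easy direction and requires no new ideas.

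For the lower bound, the key point is that the family of rings used to establish the lower bound in Theorem~\ref{general bound} can be chosen to consist of unital rings. That lower bound came (via Theorem~\ref{main nilpotent} and \cite[Theorem~2.2]{K & P}) from counting $\mathbb{F}_p$-algebras of cube zero and dimension~$n$; these are nilpotent and hence have no identity. The fix, which mirrors the trick in the proof of Theorem~\ref{nonnilpotent bound}, is to replace a nilpotent ring $N$ by the unital ring $\mathbb{F}_p\oplus N'$ where $N'$ is nilpotent of one smaller order. Concretely, I would take $R=\mathbb{F}_p\oplus N$ where $N$ ranges over nilpotent rings of order $p^{n-1}$; each such $R$ has cardinality $p^n$ and possesses the identity element $(1,0)$. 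Distinct isomorphism classes of $N$ give distinct isomorphism classes of $R$ (the nilpotent summand is recovered as an ideal, for instance as $J(R)$), so by Theorem~\ref{main nilpotent} there are $p^{\frac{4}{27}(n-1)^3+\Oh((n-1)^{5/2})}=p^{\frac{4}{27}n^3+\Oh(n^{5/2})}$ such unital rings. This matches the upper bound and completes the proof.

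I expect no serious obstacle here, since the heavy lifting has already been done in Theorems~\ref{main nilpotent} and~\ref{general bound}. The only point requiring a moment's care is verifying that the constructed unital rings lie in distinct isomorphism classes, i.e.\ that the nilpotent part is an isomorphism invariant of $R=\mathbb{F}_p\oplus N$; this follows because $J(R)\cong N$. In fact, as the paper's own remark preceding Theorem~\ref{ring with one bound} indicates, the rings constructed for the lower bound already have an identity, so the argument is essentially immediate once this observation is made explicit.
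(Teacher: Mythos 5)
Your upper bound is fine (unital rings form a subclass of all rings, so Theorem~\ref{general bound} applies), and this matches what the paper intends. The genuine problem is in your lower bound: the direct sum $R=\mathbb{F}_p\oplus N$ with $N$ a nonzero nilpotent ring does \emph{not} have a multiplicative identity, and $(1,0)$ is not one. Indeed $(1,0)\cdot(0,x)=(0,0)\neq(0,x)$ for $x\neq 0$; more generally an identity of a direct sum must be $(1,e)$ with $e$ an identity of $N$, and a nonzero nilpotent ring has no identity (an identity $e$ would satisfy $e=e^2=\cdots=e^k=0$). So the family you exhibit consists of non-unital rings, and as written the lower bound argument fails. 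This construction is appropriate for Theorem~\ref{nonnilpotent bound} (where one only needs non-nilpotence), but not here.

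The repair is standard: replace the direct sum by the Dorroh extension (unital hull). Take $N$ to range over the $p^{\frac{4}{27}(n-1)^3+\Oh(n^2)}$ cube-zero $\mathbb{F}_p$-algebras of dimension $n-1$ from \cite[Theorem~2.2]{K & P} (you need $pN=0$ for the following to stay inside cardinality $p^n$, which is why one should not use arbitrary nilpotent rings of order $p^{n-1}$ here), and set $R=\mathbb{F}_p\ltimes N$, the additive group $\mathbb{F}_p\oplus N$ with multiplication $(a,x)(b,y)=(ab,\,ay+bx+xy)$. This ring is associative, has identity $(1,0)$, has cardinality $p^n$, and $0\ltimes N$ is a nilpotent ideal with semisimple quotient $\mathbb{F}_p$, so $J(R)\cong N$ and distinct isomorphism classes of $N$ give distinct isomorphism classes of $R$. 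Since $\frac{4}{27}(n-1)^3=\frac{4}{27}n^3+\Oh(n^2)$, this yields the required lower bound. With that substitution your argument is correct and is the one the paper's one-line remark is gesturing at.
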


\section{Finite commutative rings}
\label{sec:commutative}

This section contains proofs of our theorems on the enumeration of finite commutative rings. Some preliminary results (Theorem~\ref{thm:commutative_nilpotent_lower} and Lemma~\ref{lemma:commutative_Sims}) are easily proved by adapting the proofs of their more general non-commutative versions. For example, in a ring $R$ with $r$ generators $x_{1}, \ldots, x_{r}$, we usually need to consider $r^{2}$ possible products $x_{i}x_{j}$, whereas when $R$ is commutative, there are at most $\frac{1}{2}r(r + 1)$ such products that are distinct. However, in order to prove Theorem~\ref{thm:commutative_nilpotent}, a more thorough modification of the proof of Theorem~\ref{main nilpotent} is needed. 

We use the following theorem for our lower bound. 

\begin{Thm}
\label{thm:commutative_nilpotent_lower}
The number of (isomorphism classes of) commutative nilpotent $\mathbb{F}_{p}$-algebras with cube zero and dimension $n \geq 2$ is $p^{\alpha}$, where $\alpha = \frac{2}{27}n^{3} + O(n^{2})$. 
\end{Thm}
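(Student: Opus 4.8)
The plan is to construct a large family of pairwise non-isomorphic commutative nilpotent $\mathbb{F}_p$-algebras with cube zero, mirroring the Kruse--Price lower bound for the non-commutative case (their Theorem~2.2, which gives $\frac{4}{27}n^3$) but keeping careful track of the symmetry constraint that commutativity imposes. First I would set up a two-step algebra: choose a dimension $r$ for the quotient $R/R^2$ and a dimension $t$ for $R^2$, with $r+t=n$, and build $R$ as a vector space $\langle x_1,\dots,x_r\rangle \oplus \langle y_1,\dots,y_t\rangle$, declaring $R^2=\langle y_1,\dots,y_t\rangle$, $R^3=0$, and all products $x_i x_j$ equal to prescribed elements of $R^2$. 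Because $R$ is commutative and has cube zero, the multiplication is entirely determined by a \emph{symmetric} bilinear map $R/R^2 \times R/R^2 \to R^2$, i.e.\ by the $\binom{r+1}{2}$ products $x_i x_j$ with $i\le j$, each lying in a $t$-dimensional space. This gives roughly $p^{\binom{r+1}{2} t} = p^{\frac{1}{2}r^2 t + O(n^2)}$ choices of structure constants.

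The next step is to optimize $\frac{1}{2}r^2 t$ subject to $r+t=n$ (with the requirement $R^2$ is actually all of the span, so that $t=\dim R^2$; one arranges the products to surject onto $\langle y_1,\dots,y_t\rangle$, which holds for generic choices and costs only a $p^{O(n^2)}$ factor to enforce). Writing $x=r/n$, $z=t/n=1-x$, we maximize $\frac{1}{2}x^2(1-x)$ over $0\le x\le 1$; the maximum is at $x=2/3$, giving value $\frac{1}{2}\cdot\frac{4}{9}\cdot\frac{1}{3}=\frac{2}{27}$. So with $r=\lfloor 2n/3\rfloor$ and $t=n-r$ the number of structure-constant choices is $p^{\frac{2}{27}n^3 + O(n^2)}$. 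This matches the target leading term and explains the factor of $2$ difference from the non-commutative case: the symmetric products halve the exponent $r^2 t$ to $\frac{1}{2}r^2 t$.

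Finally I must pass from counting \emph{structure constants} to counting \emph{isomorphism classes}, since different structure constants can yield isomorphic algebras. The standard orbit-counting argument applies: two such algebras are isomorphic only via a linear map respecting $R^2$, and the group acting is contained in $\mathrm{GL}_r(\mathbb{F}_p)\times\mathrm{GL}_t(\mathbb{F}_p)$ acting on the space of symmetric bilinear maps. This group has order at most $p^{r^2+t^2}=p^{O(n^2)}$, so each isomorphism class is represented by at most $p^{O(n^2)}$ distinct choices of structure constants. Dividing, the number of isomorphism classes is at least $p^{\frac{2}{27}n^3 + O(n^2)}/p^{O(n^2)} = p^{\frac{2}{27}n^3 + O(n^2)}$.

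\textbf{Main obstacle.} The hard part is not the optimization (which is elementary calculus) but verifying that the $p^{O(n^2)}$ overcounting bound is genuinely uniform, i.e.\ that the stabilizer argument does not secretly collapse the family by a super-polynomial-in-$p$ factor. One must confirm that the action of $\mathrm{GL}_r\times\mathrm{GL}_t$ on symmetric bilinear maps $R/R^2\times R/R^2\to R^2$ has orbits of size at least $p^{\frac{2}{27}n^3-O(n^2)}$ for a positive proportion of the chosen maps — equivalently that generic symmetric tensors have small stabilizers. This is where the bulk of the care lies; it is the direct analogue of the corresponding estimate in the Kruse--Price non-commutative lower bound, and the same reasoning carries over once the symmetry of the bilinear form is accounted for.
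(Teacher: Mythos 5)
Your construction and orbit-counting give a correct \emph{lower} bound: the family of cube-zero commutative algebras determined by symmetric bilinear maps $R/R^2\times R/R^2\to R^2$ with $r=\dim(R/R^2)\approx 2n/3$ does yield $p^{\frac{2}{27}n^3-O(n^2)}$ isomorphism classes, and this is essentially the construction of Poonen that the paper simply cites for this half of the theorem. One remark: the ``main obstacle'' you flag is not actually an obstacle. You do not need generic symmetric tensors to have small stabilizers; since each isomorphism class corresponds to a single orbit of the acting group (which, as you note, has order $p^{O(n^2)}$ --- strictly one should allow the full stabilizer of the flag $R^2\subset R$, including the unipotent block $\mathrm{Hom}(R/R^2,R^2)$, but this is still $p^{O(n^2)}$), every orbit has size at most the group order, and dividing the total count of surjective structure-constant tuples by $p^{O(n^2)}$ already gives the lower bound. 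No genericity statement about stabilizers is required.

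The genuine gap is that the theorem asserts an asymptotic \emph{equality}, $\alpha=\frac{2}{27}n^3+O(n^2)$, and your argument only proves $\alpha\geq\frac{2}{27}n^3-O(n^2)$. The upper bound is the part the paper actually works to prove: it adapts Kruse--Price's Theorem~2.1 to the free \emph{commutative} cube-zero algebra $F$ of rank $r$ (whose square has dimension $\frac{1}{2}r(r+1)$), obtains for each $r$ the bound $\log_p f(n,r)\leq \frac{1}{2}r(r+1)(n-r)-(n-r)^2+n-r$, and then maximises $\frac{1}{2}r^2(n-r)$ over $r$ to get $\frac{2}{27}n^3$ at $r=2n/3$. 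Your framework contains the needed ingredients --- for each of the at most $n+1$ values of $r$, \emph{every} such algebra is determined up to isomorphism by a symmetric structure-constant tuple, so the number of isomorphism classes with that $r$ is at most $p^{\frac{1}{2}r(r+1)(n-r)}$, and summing over $r$ gives $p^{\frac{2}{27}n^3+O(n^2)}$ --- but you never state or carry out this direction, so as written the proof establishes only half of the claimed statement.
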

\begin{proof}
The lower bound is a consequence of Poonen~\cite[Lemma~9.1]{Poonen} and~\cite[Theorem~9.2]{Poonen}, so it suffices to establish a corresponding upper bound.

Let $f(n, r)$ be the number of (isomorphism classes of) commutative $\mathbb{F}_{p}$-algebras $A$ such that $\dim A = n, \dim (A/A^{2}) = r$ and $A^{3} = 0$. By following the proof of~\cite[Theorem~2.1]{K & P}, but using the free \emph{commutative} $\mathbb{F}_{p}$-algebra $F$ of cube zero and rank $r$, and observing that $F^2$ has dimension $\frac{1}{2}r(r+1)$, we may prove the following. First, if $\frac{1}{2}r(r + 1) < n - r$, then $f(n, r) = 0$. Secondly, if $\frac{1}{2}r(r + 1) \geq n - r$, then
\begin{displaymath}
\tfrac{1}{2}r(r + 1)(n - r) - (n - r)^{2} - r^{2} \leq \log_{p}f(n, r) \leq \tfrac{1}{2}r(r + 1)(n - r) - (n - r)^{2} + n - r. 
\end{displaymath}
The theorem now follows by observing that the value of $\frac{1}{2}r^2(n-r)$ takes its maximum value of $2n^3/27$ when $r=2n/3$, and following the proof of~\cite[Theorem~2.2]{K & P}.
\end{proof}

\begin{Lem}
\label{lemma:subalgebra_tower}
Let $R$ be a commutative nilpotent $\mathbb{F}_p$-algebra such that $r=\dim (R/R^2)$, $s$ is the Sims dimension of $R$, $t=\dim(R^2)$ and $R^3=0$. There exist $x_1,\ldots x_r\in R$ such that $x_1+R^2,x_2+R^2,\ldots,x_r+R^2$ form a basis for $R/R^2$, such that $\langle x_1,x_2,\ldots,x_s \rangle^2=R^2$ and such that for $1\leq i\leq s-1$
\[
x_ix_{i+1}\notin \langle x_1,x_2,\ldots,x_i\rangle^2.
\]
In particular,
\[
\langle x_1\rangle^2\subset \langle x_1,x_2\rangle^2\subset \cdots \subset \langle x_1,x_2,\ldots,x_s \rangle^2
\]
is a strictly increasing sequence of subalgebras.
\end{Lem}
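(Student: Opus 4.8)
The plan is to translate the statement into a question about a single symmetric bilinear map and then to build the required basis by taking a chain of maximal length, using the minimality encoded in the Sims dimension to show that such a chain cannot stop short of length $s$.

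First I would set up the reduction. Choose a subalgebra $S$ of $R$ with $S^2=R^2$ and $\dim(S/R^2)=s$, as furnished by the definition of the Sims dimension. Since $R^3=0$ we have $R^2\cdot S=0$, so multiplication on $S$ descends to a well-defined symmetric $\mathbb{F}_p$-bilinear map $\beta\colon V\times V\to U$, where $V=S/R^2$ and $U=R^2$; here $\dim V=s$, $\dim U=t$, and the image of $\beta$ spans $U$ because $S^2=R^2$. The property I would extract from the Sims dimension is \emph{minimality}: no proper subspace $V'\subsetneq V$ has $\langle\beta(V',V')\rangle=U$, since otherwise the subalgebra generated by a lift of a basis of $V'$ would have square $R^2$ but strictly smaller image in $R/R^2$, contradicting the minimality of $s$. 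In this language the lemma asks for a basis $x_1,\ldots,x_s$ of $V$ with $\beta(x_i,x_{i+1})\notin W_i$ for all $i<s$, where $W_i:=\langle\beta(x_a,x_b):a,b\le i\rangle$; the $x_i$ are then lifted to $R$ and the basis of $V$ is extended arbitrarily to a basis of $R/R^2$.

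The one structural input I would record is a \emph{no-radical} statement: if $V$ is minimal then every nonzero $x\in V$ has $\beta(x,V)\neq 0$, for otherwise, choosing a hyperplane $H$ with $V=H\oplus\langle x\rangle$, we would get $\langle\beta(H,H)\rangle=U$, contradicting minimality. I would then argue by induction on $s$, the inductive hypothesis being needed only to settle one case. Take a chain $x_1,\ldots,x_k$ of maximal length satisfying the adjacency condition; each new element is automatically independent of its predecessors (if $x_{i+1}\in V_i:=\langle x_1,\ldots,x_i\rangle$ then $\beta(x_i,x_{i+1})\in W_i$), so it suffices to show $k=s$, whence $V_k=V$ and $W_k=U$ as required. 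Suppose for contradiction that $k<s$; maximality forces the chain to be \emph{stuck}, i.e.\ $\beta(x_k,V)\subseteq W_k$. If some earlier vector $x_j$ ($j<k$) has $\beta(x_j,V)\not\subseteq W_k$, then replacing $x_k$ by $x_k+x_j$ leaves $V_k$ and hence $W_k$ unchanged, preserves the adjacency $\beta(x_{k-1},x_k+x_j)\notin W_{k-1}$ (the extra term $\beta(x_{k-1},x_j)$ lies in $W_{k-1}$), and unsticks the chain since $\beta(x_k+x_j,V)\not\subseteq W_k$; it can therefore be extended, contradicting maximality.

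The remaining, and genuinely harder, case is the \emph{block} case $\beta(V_k,V)\subseteq W_k$, which I expect to be the main obstacle. Here I would write $V=V_k\oplus V''$ and pass to the induced form $\bar\beta$ on $V''$ with values in $U/W_k$, spanned by $\bar\beta(V'',V'')$; minimality of $V$ then forces both $V_k$ to be minimal for $W_k$ and $V''$ to be minimal for $U/W_k$, since shrinking either factor would produce a proper spanning subspace of $V$. The obstruction is that every cross product $\beta(V_k,V'')$ lands in $W_k$, so one cannot simply concatenate a chain in $V_k$ with a chain in $V''$: the junction product would lie in $W_k$ and violate adjacency. I would resolve this by applying the inductive hypothesis to the two minimal factors and then \emph{twisting} the two chains together at the junction, replacing the connecting element of the $V_k$-chain by its sum with the first element of the $V''$-chain, so that the mixed element's products reach into $U/W_k$ and the chain can be continued through $V''$. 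The delicate point, which is where the characteristic may intervene, is to choose the twisting vector of $V''$ (rescaled if necessary) so that the junction product escapes $W_k$ while none of the previously established adjacencies are destroyed by the cross terms; the no-radical property of $\bar\beta$ on the minimal space $V''$ is exactly what guarantees that a workable choice exists.
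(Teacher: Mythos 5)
Your reduction to a symmetric bilinear map $\beta\colon V\times V\to W$ on $V=S/R^2$, $W=R^2$, with the minimality property extracted from the Sims dimension, is exactly the paper's setup; the paper then simply cites Poonen's Proposition~10.1 for the existence of the required basis, so the combinatorial statement you are trying to prove from scratch is precisely that proposition. Your easy steps are correct: linear independence along the chain, the no-radical observation, and the unsticking move when some earlier $x_j$ has $\beta(x_j,V)\not\subseteq W_k$ (there the cross term $\beta(x_{k-1},x_j)$ really does lie in $W_{k-1}$). The genuine gap is the block case $\beta(V_k,V)\subseteq W_k$, which you yourself flag as the main obstacle: you describe a twisting recipe and then concede that making the junction product escape $W_k$ without destroying earlier adjacencies is the delicate point, offering only the hope that the no-radical property of $\bar\beta$ on $V''$ guarantees a workable choice. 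That delicate point is the entire content of the hard case, and it is left unproved.

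Concretely, the recipe as stated (replace $x_k$ by $x_k+cy_1$, where $y_1$ is the first element of the chosen $V''$-chain and $c$ is a scalar) can fail for every scalar. Take $p=2$, $V=\langle u,v\rangle$, $W=\langle e,f\rangle$, $\beta(u,u)=e$, $\beta(u,v)=e$, $\beta(v,v)=f$; this is realised by a commutative cube-zero $\mathbb{F}_2$-algebra, is minimal, and has $s=t=2$. The chain $x_1=u$ is stuck, $V''=\langle v\rangle$ is a complement with $y_1=v$, and the only nonzero twist gives $x_1'=u+v$ with $\beta(u+v,v)=e+f=\beta(u+v,u+v)\in W_1'$, so the junction still fails. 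The lemma is true here (take $x_1=v$, $x_2=u$), but the rescue requires changing the complement $V''$ (or discarding $x_1$ entirely), not rescaling. Two further unresolved issues: for $k\geq 2$ the cross term $\beta(x_{k-1},y_1)$ is only known to lie in $W_k$, not $W_{k-1}$, so the twist can destroy the adjacency at position $k-1$ and the justification you gave in the earlier case does not transfer; and no argument is given that ranging over complements, chains in $V''$ and scalars always yields a workable choice. To complete the proof you must supply this argument, or do what the paper does and invoke Poonen's Proposition~10.1 directly.
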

\begin{proof}
By definition of the Sims dimension, there is a subalgebra $S$ of $R$ such that $\dim S/R^2=s$ and such that $S^2=R^2$. Moreover, we may assume that no proper subalgebra $T$ of $S$ has $T^2=R^2$. The existence of $x_1,x_2,\ldots,x_s\in R$ with the properties we want follows from~\cite[Proposition~10.1]{Poonen} (where, in the notation of Proposition~10.1, we take $V=S/R^2$, $W=R^2$, and multiplication as our symmetric bilinear map from $V\times V$ to $W$). The elements $x_1,x_2,\ldots,x_s$ are linearly independent modulo $R^2$, so there exist $x_{s+1},x_{s+2},\ldots,x_r\in R$ so that  $x_1+R^2,x_2+R^2,\ldots,x_r+R^2$ form a basis for $R/R^2$. So the lemma follows.
\end{proof}

Here is a commutative version of our Lemma~\ref{alpha}.

\begin{Lem}
\label{lemma:commutative_Sims}
Let $r, s, t$ be positive integers and let $\alpha(r, s, t)$ be a real number. Suppose that $p^{\alpha(r, s, t)}$ is the number of (isomorphism classes of) commutative $\mathbb{F}_{p}$-algebras, $R$, such that $r = \dim(R/R^{2})$, $s$ is the Sims dimension of $R$, $t = \dim(R^{2})$ and $R^{3} = 0$. Then 
\begin{align}
\alpha(r, s, t) & \leq  \tfrac{1}{2}r^{2}(t - s) + \mathnormal{O}\big((r + t)^{8/3}\big)  \label{old inequal} \\
 \textit{and} \hspace{5 mm} \alpha(r, s, t) & \leq  \tfrac{1}{2}r^{2}(t - s) + \tfrac{1}{2}rst + \mathnormal{O}\big((r + t)^{5/2}\big). \label{new inequal}
\end{align}
\end{Lem}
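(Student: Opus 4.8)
The plan is to mirror the proof of Lemma~\ref{alpha} exactly, replacing each product-counting and subspace-counting step by its commutative analogue and substituting the commutative subalgebra tower of Lemma~\ref{lemma:subalgebra_tower} for the generic Sims argument of Lemma~\ref{cube zero}. Since inequality~\eqref{old inequal} is the commutative analogue of~\eqref{old_inequal} and is obtained by the same route as in \cite{K & P} (now the free object is the free \emph{commutative} algebra, whose square has dimension $\tfrac12 r(r+1)$ rather than $r^2$, which is where the factor $\tfrac12$ on the leading term comes from), I would simply cite this and concentrate on~\eqref{new inequal}.

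First I would fix $x_1,\ldots,x_r\in R$ mapping to a basis of $R/R^2$. In the commutative case $R$ is determined up to isomorphism by the $\tfrac12 r(r+1)$ products $x_ix_j$ with $i\le j$, each lying in $R^2$, giving the crude bound $\alpha(r,s,t)\le \tfrac12 r(r+1)\,t=\tfrac12 r^2 t+\Oh(rt)$. As in Lemma~\ref{alpha}, if $s\le 2r^{1/2}+1$ this already yields~\eqref{new inequal} after absorbing lower-order terms into the error. So I would assume $s>2r^{1/2}+1$, set $f=\lfloor r^{1/2}\rfloor$ and $g=\lceil r/f\rceil$, and partition $x_1,\ldots,x_r$ into $g$ blocks spanning subspaces $V_1,\ldots,V_g$ of dimension at most $f$. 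The key structural input is that, by Lemma~\ref{lemma:subalgebra_tower} applied to $R$, I may take the $x_i$ so that $\langle x_1,\ldots,x_s\rangle^2=R^2$ with a strictly increasing tower of subalgebra squares; this gives me a subalgebra $S$ with $S^2=R^2$ and $\dim S/R^2=s$, exactly the hypothesis I need so that Lemma~\ref{cube zero} (which is not restricted to the non-commutative setting) applies to bound $\dim(V_i+V_j)^2\le 2f+t-s+1=:d$, and one checks $d<t$ just as in~\eqref{fourth}.

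The counting then proceeds verbatim: each $(V_i+V_j)^2$ is contained in a $d$-dimensional subspace $W_{ij}$ of $R^2$, the number of such subspaces is bounded by $p^{d(t-d+1)}$ using~\eqref{fraction}, and there are $\binom{g}{2}$ pairs. The crucial saving over the non-commutative case is in the final product count: once all $W_{ij}$ are fixed, each of the products $x_ix_j$ lies in a $d$-dimensional space, but commutativity means there are only $\tfrac12 r(r+1)$ distinct products to specify rather than $r^2$, contributing $\tfrac12 r(r+1)d=\tfrac12 r^2 d+\Oh(rd)$. Collecting, $\alpha(r,s,t)\le \binom{g}{2}d(t-d+1)+\tfrac12 r^2 d+\Oh((r+t)^{5/2})$, and with $f,g=r^{1/2}+\Oh(1)$ and $d=t-s+\Oh(r^{1/2})$ the leading terms become $\tfrac12 r(t-s)s+\tfrac12 r^2(t-s)=\tfrac12 r^2(t-s)+\tfrac12 rst-\tfrac12 rs^2$, and dropping the nonpositive $-\tfrac12 rs^2$ term yields~\eqref{new inequal}.

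I expect the only genuine subtlety to be verifying that Lemma~\ref{cube zero} may be invoked in the commutative setting with the subalgebra $S$ supplied by Lemma~\ref{lemma:subalgebra_tower}: Lemma~\ref{cube zero} is stated for general nilpotent $\mathbb{F}_p$-algebras with $R^3=0$, so it applies to commutative $R$ as a special case, and the only thing to check is that taking $S=V_i+V_j+R^2$ (a subspace, hence automatically a subalgebra since $R^3=0$ forces $S^2\subseteq R^2\subseteq S$) still satisfies the hypotheses, which it does. Beyond this the argument is a routine transcription of Lemma~\ref{alpha} with the bookkeeping factor of $\tfrac12$ inserted at each point where products are counted, so I would write it compactly and refer back to the corresponding steps of the proof of Lemma~\ref{alpha} wherever the estimates are identical.
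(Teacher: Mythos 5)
Your proposal is correct and follows essentially the same route as the paper: the paper cites Poonen for~\eqref{old inequal} and proves~\eqref{new inequal} by repeating the argument of Lemma~\ref{alpha} verbatim, with the single change that only $\tfrac{1}{2}r(r+1)$ products $x_ix_j$ need to be specified, arriving at exactly your bound $\binom{g}{2}d(t-d+1)+\tfrac{1}{2}r(r+1)d$. Your appeal to Lemma~\ref{lemma:subalgebra_tower} is harmless but unnecessary, since the definition of Sims dimension already supplies the subalgebra $S$ with $S^2=R^2$ and $\dim S/R^2=s$ needed to invoke Lemma~\ref{cube zero}.
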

\begin{proof}
The inequality~\eqref{old inequal} is Poonen~\cite[Proposition~10.4]{Poonen}. The proof of~\eqref{new inequal} closely follows the proof of~\eqref{new_inequal} in Lemma~\ref{alpha} above. The sole change is to observe that we only require knowledge of the $\frac{1}{2}r(r+1)$ products $x_ix_j$ where $1\leq i\leq j\leq r$ since we are working in a commutative ring so, in the notation of the proof of Lemma~\ref{alpha},
\[
\alpha(r,s,t)\leq \binom{g}{2}d(t-d+1)+\tfrac{1}{2}r(r+1)d\leq \tfrac{1}{2}r^{2}(t - s) + \tfrac{1}{2}rst + \mathnormal{O}\big((r + t)^{5/2}\big).\qedhere
\]
\end{proof}

As in Section~\ref{Nilpotent case}, we choose a representative $R$ of each isomorphism class of commutative $\mathbb{F}_p$-algebras of cube zero described in Lemma~\ref{lemma:commutative_Sims}, and then we choose a \emph{standard basis} $x_1,x_2,\ldots,x_r,y_1,y_2,\ldots ,y_t$ of $R$. As before, the elements of the standard basis should have the properties that: the elements $x_1+R^2,x_2+R^2,\ldots,x_r+R^2$ form a basis for $R/R^2$; the elements $y_1,y_2,\ldots,y_t$ form a basis for $R^2$; each element $y_i$ may be written in the form $y_i=x_kx_\ell$ for some $k,\ell\in\{1,2,\ldots,s\}$. In addition, we require the following two properties. First, we require that
\[
\langle x_1\rangle^2\subset \langle x_1,x_2\rangle^2\subset \cdots \subset \langle x_1,x_2,\ldots,x_s \rangle^2
\]
is a strictly increasing sequence of subalgebras. Secondly, writing $q_i=\dim \langle x_1,x_2,\ldots,x_i \rangle^2$, we require that $y_1,y_2,\ldots ,y_{q_i}$ is a basis of $\langle x_1,x_2,\ldots,x_i \rangle^2$. A standard basis exists, by Lemma~\ref{lemma:subalgebra_tower}. Note that, since $q_1<q_2<\cdots<q_s=t$, we find that
\begin{equation}
\label{q}
q_i\leq t-s+i\text{ for }1\leq i\leq s.
\end{equation} 

For each element $y_j$, we choose a representation of the form $y_j=x_kx_\ell$ for some $k,\ell\in\{1,2,\ldots,s\}$, and call this the \emph{standard monomial representation} of $y_j$.
Clearly we may insist that $k\leq \ell$ here. Moreover, when $q_{i-1}<j\leq q_{i}$ we may (and we do) choose $\ell=i$.

\begin{Thm}
\label{thm:commutative_nilpotent}
The number of (isomorphism classes of) commutative nilpotent rings of cardinality $p^{n}$ is $p^{\alpha}$, where
$
\alpha = \frac{2}{27}n^{3} + \mathnormal{O}(n^{5/2}).
$
\end{Thm}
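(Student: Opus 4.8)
The plan is to take the lower bound directly from Theorem~\ref{thm:commutative_nilpotent_lower} and to prove a matching upper bound by rerunning the argument of Theorem~\ref{main nilpotent} with its commutative refinements inserted at each step. Given a commutative nilpotent ring $R$ of order $p^n$, I would again set $\overline{R}=R/pR$, fix the invariants $w$, $r$, $s$, $t$, $m$ and the integers $u_h$ (contributing only $p^{\Oh(n)}$ choices), and then choose the isomorphism class of $\overline{R}/\overline{R}^3$. The first essential gain is that this last choice now costs only $p^{\alpha(r,s,t)}$ with $\alpha(r,s,t)\le \tfrac12 r^2(t-s)+\tfrac12 rst+\Oh((r+t)^{5/2})$ by Lemma~\ref{lemma:commutative_Sims}, so that the dominant $r^2(t-s)$ term is halved. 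I would then work with the \emph{commutative} standard basis produced by Lemma~\ref{lemma:subalgebra_tower}, keeping the filtration $\langle x_1\rangle^2\subset\cdots\subset\langle x_1,\dots,x_s\rangle^2$, the monomial basis $y_1,\dots,y_t$ of $\overline{R}^2$ compatible with it, and the inequality $q_i=\dim\langle x_1,\dots,x_i\rangle^2\le t-s+i$ of~\eqref{q}.

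The heart of the argument is the recount of structure constants. Since $R$ is commutative, $\overline{R}$ is determined, after the choices above, by the products $\overline{x}_i\overline{x}_j$ with $i\le j$ alone, of which there are $\tfrac12 r(r+1)$ rather than $r^2$; correspondingly the coefficient count $r^2u$ occurring in the bound for $\beta$ in Theorem~\ref{main nilpotent} becomes $\tfrac12 r(r+1)u$. The products $\overline{x}_i\overline{e}_j$ that govern the passage through $\overline{R}^3,\overline{R}^4,\dots$ are treated using the standard monomial representations together with commutativity and associativity: writing $\overline{e}_j$ as a sorted monomial in $\overline{x}_1,\dots,\overline{x}_s$ and reinserting $\overline{x}_i$, the tower property forces $\overline{x}_i\overline{e}_j$ into the appropriate subalgebra $\langle x_1,\dots,x_\ell\rangle^2$, and a straightening/induction on degree expresses such a product in terms of lower-index products, discarding the redundancies forced by the symmetry of the triple products. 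This bounds the corresponding contribution by its non-commutative counterpart. The lifting step to $R$ is then essentially verbatim: after fixing an additive group $G$ (at cost $p^{\Oh(n^2)}$), the $\tfrac12 r(r+1)$ products $x_ix_j$ and the products $x_ie_j$ are each known modulo $pR$, contributing a factor $p^{\gamma}$ carrying the reduced product count, and the same associativity bootstrap as in Theorem~\ref{main nilpotent} shows these data determine the multiplication on $R$.

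Collecting exponents yields an expression for $\delta/n^3$ in the scaled variables $x=r/n$, $y=s/n$, $z=t/n$, $v=w/n$ of the same shape as~\eqref{delta and alpha}, but with the $r^2$-type terms halved, and the task reduces to bounding this by $\tfrac{2}{27}$ subject to~\eqref{triangle}. Following Seeley and Newman, I would split into $x\le 3/5$, where the weaker bound~\eqref{old inequal} suffices and a routine calculus optimization should return a constant strictly below $\tfrac{2}{27}$ (the analogue of the $18/125$ computation), and $x\ge 3/5$, where the sharper bound~\eqref{new inequal} is needed. I expect this second region to be the main obstacle: the term $\tfrac12 rst$ in~\eqref{new inequal} does \emph{not} halve, so I must verify by explicit calculus that the maximum of the resulting function over the constrained region is still exactly $\tfrac{2}{27}$, and I must check that the un-symmetric products $\overline{x}_i\overline{e}_j$ and $x_ie_j$ really are controlled tightly enough by the tower property that they contribute no more than in the non-commutative optimization. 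Confirming that these products do not spoil the clean halving of the leading term, and that the optimum genuinely drops from $\tfrac{4}{27}$ to $\tfrac{2}{27}$ rather than to some intermediate value, is the step I expect to require the most care.
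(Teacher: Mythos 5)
Your overall architecture (lower bound from Theorem~\ref{thm:commutative_nilpotent_lower}, upper bound by rerunning Theorem~\ref{main nilpotent} with Lemmas~\ref{lemma:commutative_Sims} and~\ref{lemma:subalgebra_tower}, Seeley--Newman split at $r=3n/5$) matches the paper, but there is a genuine gap at exactly the step you flag as needing care, and it is fatal to the count as you describe it. You propose to halve the symmetric products --- $\tfrac12 r(r+1)$ choices of $x_ix_j$ instead of $r^2$, and the halved leading term in $\alpha(r,s,t)$ --- while bounding the contribution of the mixed products $x_ie_j$ ``by its non-commutative counterpart,'' i.e.\ keeping all $s(w-r)$ of them, each costing $p^{n-w}$ in the lift and contributing $\tfrac12 s\{(w-r)^2-t^2\}$ coefficients $\nu_{i,j,k}$. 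That optimization does \emph{not} stay below $\tfrac{2}{27}n^3$. For instance, at $r=s=t=\tfrac{3}{10}n$ and $w=n$ the resulting exponent is at least
\[
\tfrac12 r^2(n-r-t)+\tfrac12 s\bigl((n-r)^2-t^2\bigr)=\bigl(\tfrac{9}{500}+\tfrac{3}{50}\bigr)n^3=\tfrac{39}{500}n^3=0.078\,n^3>\tfrac{2}{27}n^3\approx 0.074\,n^3 ,
\]
and this is in the region $r\le 3n/5$ where even the weaker bound \eqref{old inequal} is available. The terms involving $s$ do not halve, so once the $r^2$ terms are halved they dominate and push the maximum above the target; the paper states this explicitly (``we do not get a sufficiently tight bound: we produce a bound of the form $p^{cn^3+O(n^{5/2})}$ with $c>2/27$'').

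The missing idea is that commutativity must be used to \emph{shrink the set of mixed products that need to be specified}, not merely to bound them by the old count. Using the tower of Lemma~\ref{lemma:subalgebra_tower} and the standard monomial representations with $\ell=i$ when $q_{i-1}<j\le q_i$, one shows that multiplication in $R$ is determined by the products $x_ix_j$ ($i\le j$), the products $x_ie_j$ only for $u+1\le j\le u+q_i$ (with $q_i\le t-s+i$ by \eqref{q}, this replaces $st$ products by roughly $st-\tfrac12 s^2$ of them), and the products $x_ie_j$ for $j\le u$, which moreover land in the progressively smaller subgroups $pR+\langle e_1,\ldots,e_{j-1}\rangle$, saving a further $\tfrac12 su^2$. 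Establishing that these restricted data suffice is itself delicate in the ring (as opposed to algebra) setting: the rewriting $x_ie_j=x_{b'}(x_ix_{a'})$ produces terms $p\,e_k$ with $k>u+q_{b'}$, whose products with $x_{b'}$ are not among the retained data, and the paper handles this by a downward induction on the power $a$ in $p^ax_ie_j$ starting from the characteristic $p^\kappa$ --- an induction on the $p$-adic level, not on monomial degree as you suggest. Without these savings the constant in the cubic term does not come down to $\tfrac{2}{27}$, so the proposal as written does not prove the theorem.
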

\begin{proof}
From Theorem~\ref{thm:commutative_nilpotent_lower}, the number of commutative $\mathbb{F}_{p}$-algebras with cube zero and order $p^{n}$ is $p^{\alpha'}$, where $\alpha' = 2n^{3}/27 + \mathnormal{O}(n^{2})$. Hence it is sufficient to prove that $p^{\alpha}$ is an upper bound for the number of commutative nilpotent rings of order $p^{n}$.

As in the proof of Theorem~\ref{main nilpotent}, we choose a concrete abelian group $G$ of order $p^n$ and rank $w$ that will be isomorphic to the additive group of $R$. We set $\overline{R}=R/pR$, so $\overline{R}$ is an $\mathbb{F}_p$-algebra of order $p^w$, and choose $r=\dim (\overline{R}/\overline{R}^2)$, $t=\dim (\overline{R}^2/\overline{R}^3)$, $u=\dim(\overline{R}^3)$ and the Sims dimension $s$ of $\overline{R}$ (which is equal to the Sims dimension of $\overline{R}/\overline{R}^3$). We choose $m$, the least integer such that $\overline{R}^m=0$, and we choose integers $u_h=\dim (\overline{R}^h/\overline{R}^{h+1})$.
The argument in the proof of Theorem~\ref{main nilpotent} shows we have made $p^{O(n)}$ choices so far. We choose one of the $p^{\alpha(r,s,t)}$ isomorphism classes for the $\mathbb{F}_p$-algebra $\overline{R}/\overline{R}^3$.

For a nilpotent ring $R$ of cardinality $p^n$ that respects the choices we made above, we use a standard basis for $\overline{R}/\overline{R}^3$ to construct a basis
\[
\overline{x}_1,\overline{x}_2, \ldots,\overline{x}_r,\overline{e}_1,\overline{e}_2,\ldots,\overline{e}_{u+t} 
\]
for $\overline{R}$ where each element $\overline{e}_j$ is a monomial in $\overline{x}_1,\overline{x}_2,\ldots,\overline{x}_s$ just as in the proof of Theorem~\ref{main nilpotent}.  
In particular, $\overline{e}_{u-u_3+1},\overline{e}_{u-u_3+2},\ldots,\overline{e}_{u}$ form a basis for $\overline{R}^3$ modulo $\overline{R}^4$, and each element
$\overline{e}_{u-u_3+k}$ has the form $\overline{x}_{a}\overline{e}_{b}$ where $1\leq a\leq s$ and $u< b\leq u+t$. We claim that we may insist in addition that
$u< b\leq u+q_a$. To prove the claim, it suffices to show that the set
\[
S=\{\overline{x}_{a}\overline{e}_{b}:1\leq a\leq s\text{ and }u< b\leq u+q_a\}
\]
spans $\overline{R}^3$ modulo $\overline{R}^4$. The argument in the proof of Theorem~\ref{main nilpotent} shows that the larger set
\[
\hat{S}=\{\overline{x}_{a}\overline{e}_{b}:1\leq a\leq s\text{ and }u< b\leq u+t\} 
\]
spans $\overline{R}^3$ modulo $\overline{R}^4$. An element in $\hat{S}\setminus S$ may be written in the form $\overline{x}_{a}\overline{e}_{b}$ where $b>u+q_{a}$. We show that such an element is a sum of elements of $S$ modulo $\overline{R}^4$, which will establish our claim. Since $b>u+q_{a}$, we see that $\overline{e}_b=\overline{x}_{f}\overline{x}_{g}$ with $f\leq g$ and $g>a$. Using the fact that our ring is commutative,
\[
\overline{x}_{a}\overline{e}_{b}=\overline{x}_{a}\overline{x}_{f}\overline{x}_{g}=\overline{x}_{g}(\overline{x}_{f}\overline{x}_{a}).
\]
Since $a<g$ and $f\leq g$, we see that $\overline{x}_{f}\overline{x}_{a}+\overline{R}^3$ is contained in $\langle{x_1},x_2,\ldots,x_{g}\rangle^2+\overline{R}^3$. So,
working modulo $\overline{R}^3$, the product $\overline{x}_{f}\overline{x}_{a}$ is a sum of elements $\overline{e}_j$ with $u< j\leq u+q_{g}$. Hence, working modulo $\overline{R}^4$, the product $\overline{x}_{g}(\overline{x}_{f}\overline{x}_{a})$ is a sum of elements in $S$, and our claim follows.

We construct an additive generating set $x_1,x_2,\ldots,x_r,e_1,e_2,\ldots ,e_{u+t}$ for $R$ using the monomial representations of $\overline{e}_1,\overline{e}_2,\ldots ,\overline{e}_{u+t}$ just as before. We identify the elements of this generating set with elements of $G$: this requires making $p^{O(n^2)}$ choices.

The argument in Theorem~\ref{main nilpotent} shows that the isomorphism class of $R$ is determined once we know the products $x_ix_j$ with $1\leq i,j\leq r$ and the products $x_ie_j$ for $1\leq i\leq s$ and $1\leq j\leq u+t$ (as elements of $G$). Since $R$ is commutative, we only need to know the products $x_ix_j$ with $1\leq i\leq j\leq r$ and the products $x_ie_j$ for $1\leq i\leq s$ and $1\leq j\leq u+t$. Unfortunately, if we count the number of possibilities for these equations using the methods in Theorem~\ref{main nilpotent}, we do not get a sufficiently tight bound: we produce a bound of the form $p^{c n^3+O(n^{5/2})}$ with $c>2/27$. We improve this bound by showing that we do not need to know all of the products $x_ie_j$. Indeed we claim that multiplication in our ring is determined once we know the following products (as elements of $G$):

\begin{itemize}
\item[(i)] the products $x_ix_{j}$  for all $i, j$ such that $1 \leq i \leq j \leq r$,
\item[(ii)] the products $x_{i}e_{j}$ for all $i, j$ such that $1 \leq i \leq s$ and $u +1\leq j \leq u + q_i$.
\item[(iii)] the products $x_{i}e_{j}$ for  $1 \leq i \leq s$ and $1 \leq j \leq u$.
\end{itemize}

To prove this claim, we first observe, just as in Theorem~\ref{main nilpotent},  that these equations allows us to express every element $e_j$ as a monomial in $x_1,x_2,\ldots,x_s$. (Our more careful choice of $\overline{e}_{u-u_3+1},\overline{e}_{u-u_3+2},\ldots,\overline{e}_{u}$ is required at this point.) We show that the products $\overline{x}_{i}\overline{e}_{j}$ for $1\leq i\leq s$ and $1\leq j\leq u+t$ are determined by (i) to (iii) above (together with the additive structure $G$ of $R$). This suffices to prove our claim, using the argument in the proof of Theorem~\ref{main nilpotent}.   Indeed, writing $\mathrm{char}(R) = p^\kappa$, we will show that the products $p^ax_{i}e_{j}$ are determined for $0\leq a\leq \kappa$ by induction on $\kappa-a$ (the case $a=0$ giving us what we need). Clearly $p^ax_{i}e_{j}=0$ when $a=\kappa$ (and so $p^ax_{i}e_{j}$ is determined). Now suppose, as an inductive hypothesis, that $0<a\leq\kappa$ and the products $p^ax_{i}e_{j}$ are determined by~(i) to~(iii). We show that $p^{a-1}x_{i}e_{j}$ is determined. When $j\leq u+q_i$ the product is determined by the equations~(ii) and~(iii), so we may assume that $j>u+q_i$.  We may write $e_j=x_{a'}x_{b'}$ where $a'\leq b'$ and $b'>i$ (and this expression is determined by~(i)). Since $R$ is commutative, $p^{a-1}x_ie_j=p^{a-1}x_{b'}(x_ix_{a'})$. Since $a'\leq b'$ and $i<b'$ we find that $\overline{x}_i\overline{x}_{a'}\in\langle\overline{x}_1,\overline{x}_2,\ldots,\overline{x}_{b'}\rangle^2$, and so 
\[
x_ix_{a'}\in \langle e_1,e_2,\ldots,e_{u+q_{b'}},pe_{u+q_{b'}+1},\ldots,pe_{u+t}\rangle.
\]
Since multiplication is distributive and the product $x_ix_{a'}$ is determined by~(i), we see that $p^{a-1}x_ie_j$ is determined once we know $p^{a-1}x_{b'}x$ where
\[
x\in\{e_1,e_2,\ldots,e_{u+q_{b'}},pe_{u+q_{b'}+1},\ldots,pe_{u+t}\}.
\]
But the products
\[
p^{a-1}x_{b'}e_{1},p^{a-1}x_{b'}e_{2},\ldots,p^{a-1}x_{b'}e_{u+q_{b'}}
\]
are determined by the equations~(ii) and~(iii), and the products
\[
p^{a-1}(px_{b'}e_{u+q_{b'}+1}),p^{a-1}(px_{b'}e_{u+q_{b'}+2}),\ldots p^{a-1}(px_{b'}e_{u+t})
\]
are determined by our inductive hypothesis. Thus, since multiplication is distributive, $p^{a-1}\overline{x}_{i}\overline{e}_{j}$ is determined. So our result follows by induction. 

We have now shown that the isomorphism class of $R$ is fixed once we have chosen the products (i), (ii) and (iii) above. We now provide an upper bound for the number of these choices.

Each product $x_ix_j$ lies in $R^2$, and is already determined modulo $R^3+pR$ because we have chosen $\overline{x}_i$ as a standard basis. Since $|pR| = p^{n-w}$ and $|\overline{R}^3|=p^u$ we see that $|R^3+pR|=p^{n-w+u}=p^{n-r-t}$ and so there are at most $p^{n-r-t}$ choices for the product $x_ix_j$. There are $\frac{1}{2}r(r+1)$ products of the form~(i), and so the number of choices for for these products is at most $p$ to the power $\frac{1}{2}r(r+1)(n-r-t)$. Hence the number of choices for the products~(i) is at most $p$ to the power $\frac{1}{2}r^2(n-r-t)+O(n^2)$. 

For a fixed integer $i$ with $1\leq i\leq s$, there are $q_i$ choices of integers $j$ such that $u+1\leq j\leq u+q_i$. The product $x_ie_j$ lies in $R^3$, and $|R^3|\leq |R^3+pR|=p^{n-r-t}$. Therefore the number of choices for the equations~(ii) is at most $p$ to the power $\sum_{i=1}^sq_i(n-r-t)$. Using~\eqref{q}, we see that
\begin{align*}
\sum_{i=1}^sq_i(n-r-t)&\leq \sum_{i=1}^s(t-s+i)(n-r-t)\\
&=\left(s(t-s)+\tfrac{1}{2}s(s+1)\right)(n-r-t).
\end{align*}
So the number of choices for the products~(ii) is at most $p$ to the power $(st-\tfrac{1}{2}s^2)(n-r-t)+O(n^2)$.

A product $x_ie_j$ of the form~(iii) lies in $pR+\langle{e_1,e_2,\ldots,e_{j-1}}\rangle$, which is a subgroup of $G$ of order $p^{n-w+(j-1)}$. So the number of choices for the products~(iii) is at most $p$ to the power $s\sum_{j=1}^u(n-w+j-1)$. Since $w=r+t+u$, the number of choices for the products~(iii) is at most $p$ to the power $su(n-r-t)-\frac{1}{2}su^2+O(n^2)$.

To summarise, we have shown that the number of isomorphism classes of nilpotent commutative rings of order $p^n$ is at most $p^{\delta+O(n^2)}$, where $\delta$ is the maximum value of
\[
\alpha(r,s,t)+\tfrac{1}{2}r^2(n-r-t)+(st-\tfrac{1}{2}s^2)(n-r-t)+su(n-r-t)-\tfrac{1}{2}su^2
\]
over all non-negative integers $r,s,t,u$ such that $r+t+u\leq n$, $r\geq 1$ and $s\leq\min(r,t+1)$. Since $\delta$ is an increasing function of $u$, we may assume that $r+t+u=n$ in our maximisation.

We bound $\delta$ using standard techniques from calculus, just as in the proof of Theorem~\ref{main nilpotent}. Indeed, when $r\leq (3/5)n$ we may use the upper bound~\eqref{old inequal} for $\alpha(r,s,t)$ and show that $\delta\leq (9/125)n^3+O(n^{8/3})$. Since $9/125<2/27$, this implies that $\delta\leq (2/27)n^3+O(n^{5/2})$ when $r\leq (3/5)n$. When $r\geq (3/5)n$ we may use the upper bound~\eqref{new inequal} for $\alpha(r,s,t)$ to show that $\delta\leq (2/27)n^3+O(n^{5/2})$. So the theorem follows.
\end{proof}

Using Theorems~\ref{extension} and~\ref{thm:commutative_nilpotent} and mimicking the proof of Theorem~\ref{general bound} yields the following general result.

\begin{Thm}
\label{commutative main}
The number of isomorphism classes of commutative rings of cardinality $p^{n}$ is $p^{\delta}$, where $\delta = \frac{2}{27}n^{3} + O(n^{5/2})$.
\end{Thm}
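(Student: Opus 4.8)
The plan is to mirror the structure of the proof of Theorem~\ref{general bound} exactly, substituting the commutative analogues of each ingredient at the appropriate places. The key observation is that the reduction machinery developed in Section~\ref{General case}---Theorem~\ref{S_exist}, Corollary~\ref{S_count}, Theorem~\ref{module_count}, and above all Theorem~\ref{extension}---is entirely structural and makes no use of commutativity, so it transfers verbatim to the commutative setting. The only place where commutativity enters the enumeration is in the count of nilpotent rings, and that refinement has already been carried out in Theorem~\ref{thm:commutative_nilpotent}, which gives the sharpened constant $\tfrac{2}{27}$ in place of $\tfrac{4}{27}$.

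First I would establish the lower bound. By Theorem~\ref{thm:commutative_nilpotent_lower}, there are $p^{\frac{2}{27}n^3+\Oh(n^2)}$ commutative nilpotent $\mathbb{F}_p$-algebras of cube zero and dimension~$n$; each such algebra is a commutative ring of cardinality $p^n$, so this furnishes the required lower bound $\delta\geq \frac{2}{27}n^3+\Oh(n^2)$. It then suffices to prove the matching upper bound.

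For the upper bound I would proceed exactly as in Theorem~\ref{general bound}. Let $R$ be a commutative ring of cardinality $p^n$. Its Jacobson radical $J(R)$ is nilpotent of cardinality $p^r$ for some $0\leq r\leq n$, and since $R$ is commutative so is $J(R)$. By Theorem~\ref{thm:commutative_nilpotent} there are at most $p^{\frac{2}{27}r^3+\Oh(r^{5/2})}$ choices for the isomorphism class of the commutative nilpotent ring $J(R)$. Here I must check that Theorem~\ref{extension} may be applied unchanged: its hypothesis is merely that $J$ is a nilpotent ring of cardinality $p^r$, with no commutativity assumption, so once the isomorphism class of $J(R)$ is fixed there are at most $n\,p^{7n^2+9n+2}$ isomorphism classes of rings $R$ (commutative or not) with that radical. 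Summing over $r$ gives
\[
\sum_{r=0}^{n} n\,p^{7n^2+9n+2}\,p^{\frac{2}{27}r^3+\Oh(r^{5/2})}=p^{\frac{2}{27}n^3+\Oh(n^{5/2})},
\]
since the dominant term comes from $r=n$ and the non-nilpotent overhead $p^{\Oh(n^2)}$ is absorbed into the error term.

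The only genuine subtlety---and the one step I would treat with some care rather than merely citing---is the legitimacy of reusing Theorem~\ref{extension}. One should note that the count it provides is an upper bound over \emph{all} rings $R$ with the given radical; restricting attention to the commutative ones among them can only decrease the count, so the bound remains valid (indeed one could imagine sharpening it using commutativity, but this is unnecessary since the radical term already dominates). Everything else is a routine transcription of the proof of Theorem~\ref{general bound} with $\frac{4}{27}$ replaced by $\frac{2}{27}$, which is precisely why the paper contents itself with the remark that the result follows ``by mimicking the proof of Theorem~\ref{general bound}.''
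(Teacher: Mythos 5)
Your proposal is correct and is exactly the argument the paper intends: the paper proves this theorem by the one-line remark that one uses Theorems~\ref{extension} and~\ref{thm:commutative_nilpotent} and mimics the proof of Theorem~\ref{general bound}, which is precisely what you have written out, including the (correct) observation that Theorem~\ref{extension} needs no commutativity hypothesis and that restricting its count to commutative rings only helps.
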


Theorems~\ref{not very nonnilpotent bound}, \ref{nonnilpotent bound} and~\ref{ring with one bound} all have commutative analogues: 

\begin{Thm} \label{commutative not very nonnilpotent bound}
Let $g(n)$ be the number of isomorphism classes of commutative rings of order $p^n$.
Let $g_s(n)$ be the number of isomorphism classes of commutative rings $R$ such that $|R|=p^n$ and $R/J(R)\geq p^s$. 
There exists a positive real number $\sigma$ such that, when we set $s=\sigma \sqrt{n}$,
\[
\lim_{n\rightarrow\infty} g_{s}(n)/g(n)=0.
\]
\end{Thm}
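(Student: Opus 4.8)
The plan is to mimic the proof of Theorem~\ref{not very nonnilpotent bound}, replacing every non-commutative enumeration ingredient by its commutative counterpart so that the leading exponent drops from $\frac{4}{27}$ to $\frac{2}{27}$. The crucial observation is that the Jacobson radical $J(R)$ of a commutative ring $R$ is itself a commutative nilpotent ring, so the number of choices for its isomorphism class is bounded by Theorem~\ref{thm:commutative_nilpotent} (rather than by Theorem~\ref{main nilpotent}). This smaller count is exactly what matches the commutative lower bound.

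First I would record the two ingredients. By Theorem~\ref{thm:commutative_nilpotent} there is a positive constant $\kappa$ such that the number of commutative nilpotent rings of cardinality $p^r$ is at most $p^{\frac{2}{27}r^3+\kappa r^{5/2}}$. By Theorem~\ref{thm:commutative_nilpotent_lower}, the commutative $\mathbb{F}_p$-algebras of cube zero already furnish the lower bound $g(n)\geq p^{\frac{2}{27}n^3+O(n^2)}$. Next I would fix $\sigma$ to be any positive real with $\frac{2}{9}\sigma>\kappa$ and set $s=\sigma\sqrt{n}$. A commutative ring $R$ counted by $g_s(n)$ has $R/J(R)\geq p^s$, hence $|J(R)|=p^r$ with $r\leq n-s$, so $r\leq n-\lceil s\rceil$. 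Since $J(R)$ is commutative nilpotent, there are at most $p^{\frac{2}{27}r^3+\kappa r^{5/2}}$ choices for its isomorphism class, and once this class is fixed, Theorem~\ref{extension} (which bounds all rings with a given radical, with no commutativity hypothesis) leaves at most $n\,p^{7n^2+9n+2}$ possibilities for $R$. Summing exactly as in Theorem~\ref{general bound} gives
\[
g_s(n)\leq\sum_{r=0}^{n-\lceil s\rceil} n\,p^{7n^2+9n+2}\,p^{\frac{2}{27}r^3+\kappa r^{5/2}}=p^\alpha,
\]
where, since the summand is increasing in $r$,
\[
\alpha\leq\tfrac{2}{27}(n-s)^3+\kappa n^{5/2}+O(n^2)=\tfrac{2}{27}n^3+\bigl(\kappa-\tfrac{2}{9}\sigma\bigr)n^{5/2}+O(n^2).
\]
Dividing by the lower bound for $g(n)$ then yields $g_s(n)/g(n)\leq p^{(\kappa-\frac{2}{9}\sigma)n^{5/2}+O(n^2)}$, which tends to $0$ as $n\to\infty$ because $\frac{2}{9}\sigma>\kappa$.

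I do not expect a genuine obstacle here beyond bookkeeping; the one point that warrants care is confirming that the coefficient $\frac{2}{9}\sigma$ of the dominant correction term comes out correctly. It arises from expanding $\frac{2}{27}(n-s)^3=\frac{2}{27}n^3-\frac{2}{9}n^2s+\frac{2}{9}ns^2-\frac{2}{27}s^3$ and substituting $s=\sigma\sqrt{n}$, so that the $n^{5/2}$ contribution is $-\frac{2}{9}\sigma\,n^{5/2}$ while the remaining terms are absorbed into $O(n^2)$. This negative contribution must strictly dominate the $+\kappa n^{5/2}$ coming from the error term in the commutative nilpotent count, which is precisely the inequality $\frac{2}{9}\sigma>\kappa$ that I would impose at the outset. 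The only other thing to verify is that Theorem~\ref{extension} may legitimately be invoked with the commutative radical $J$, which is immediate since that theorem makes no commutativity assumption on $R$.
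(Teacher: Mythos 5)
Your proposal is correct and is exactly the ``straightforward modification'' that the paper has in mind (it omits the proof, referring back to Theorem~\ref{not very nonnilpotent bound}): you replace the nilpotent count by Theorem~\ref{thm:commutative_nilpotent}, reuse Theorem~\ref{extension} unchanged since it has no commutativity hypothesis, and correctly adjust the constant to $\tfrac{2}{9}\sigma>\kappa$ coming from $3\cdot\tfrac{2}{27}$.
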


\begin{Thm} \label{commutative nonnilpotent bound}
The number of isomorphism classes of commutative non-nilpotent rings of cardinality $p^{n}$ is $p^{\frac{2}{27}n^{3} + \Oh(n^{5/2})}$.
\end{Thm}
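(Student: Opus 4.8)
The plan is to mirror the proof of Theorem~\ref{nonnilpotent bound}, substituting the commutative analogues at each step. The upper bound is immediate: every commutative non-nilpotent ring of cardinality $p^n$ is in particular a commutative ring of cardinality $p^n$, so Theorem~\ref{commutative main} gives that there are at most $p^{\frac{2}{27}n^3+\Oh(n^{5/2})}$ such rings. Hence the entire content of the proof lies in exhibiting a matching lower bound, and for this I would construct an explicit family of commutative non-nilpotent rings that is large enough.

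For the lower bound I would consider rings of the form $\mathbb{F}_p\oplus N$, where $N$ ranges over commutative nilpotent rings of order $p^{n-1}$. Each such ring is commutative, since it is a direct sum of commutative rings, and is non-nilpotent, since the summand $\mathbb{F}_p$ contains a nonzero idempotent. By Theorem~\ref{thm:commutative_nilpotent}, the number of isomorphism classes of $N$ is $p^{\frac{2}{27}(n-1)^3+\Oh((n-1)^{5/2})}=p^{\frac{2}{27}n^3+\Oh(n^{5/2})}$, the error term in the exponent being unaffected by replacing $n-1$ by $n$.

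It remains to check that distinct isomorphism classes of $N$ yield distinct isomorphism classes of $\mathbb{F}_p\oplus N$, so that the count above really is a lower bound for the number of rings of the prescribed form. This follows because the Jacobson radical is an isomorphism invariant, and $J(\mathbb{F}_p\oplus N)=J(\mathbb{F}_p)\oplus J(N)=0\oplus N\cong N$, since $\mathbb{F}_p$ is semisimple and $N$ is nilpotent; thus $N$ can be recovered (up to isomorphism) from $\mathbb{F}_p\oplus N$. Consequently $N\mapsto \mathbb{F}_p\oplus N$ is injective on isomorphism classes, and the family has cardinality $p^{\frac{2}{27}n^3+\Oh(n^{5/2})}$, giving the desired lower bound.

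The argument presents no genuine obstacle: all the difficulty has already been absorbed into Theorems~\ref{thm:commutative_nilpotent} and~\ref{commutative main}. The only point requiring a word of care is the injectivity of the construction, and this is handled by the Jacobson radical invariant as above; the verification that $\mathbb{F}_p\oplus N$ is non-nilpotent and commutative is routine.
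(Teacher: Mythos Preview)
Your proof is correct and follows exactly the approach the paper intends: the upper bound from Theorem~\ref{commutative main} and the lower bound via the family $\mathbb{F}_p\oplus N$ with $N$ commutative nilpotent of order $p^{n-1}$, counted by Theorem~\ref{thm:commutative_nilpotent}. Your added justification of injectivity via $J(\mathbb{F}_p\oplus N)\cong N$ is a welcome detail that the paper leaves implicit.
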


\begin{Thm} \label{commutative ring with one bound}
The number of isomorphism classes of commutative rings with identity that have cardinality~$p^{n}$ is $p^{\frac{2}{27}n^{3} + \Oh(n^{5/2})}$.
\end{Thm}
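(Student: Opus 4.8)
The plan is to prove the theorem by combining an immediate upper bound with a lower bound obtained by adjoining an identity to nilpotent algebras. For the upper bound, I would simply observe that every commutative ring with identity is in particular a commutative ring, so Theorem~\ref{commutative main} already shows that there are at most $p^{\frac{2}{27}n^3 + \Oh(n^{5/2})}$ isomorphism classes of commutative rings with identity of cardinality $p^n$. No further work is needed here.

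For the lower bound, the strategy is the unitalization construction. Given a commutative nilpotent $\mathbb{F}_p$-algebra $A$ of dimension $n-1$, I would form $A^+ = \mathbb{F}_p\cdot 1\oplus A$, defining the product of $a_1+x_1$ and $a_2+x_2$ (with $a_i\in\mathbb{F}_p$ and $x_i\in A$) to be $a_1a_2+(a_1x_2+a_2x_1+x_1x_2)$. Since $A$ is commutative this product is symmetric, so $A^+$ is a commutative ring with identity $1$ of order $p^n$. By Theorem~\ref{thm:commutative_nilpotent_lower}, applied in dimension $n-1$, there are $p^{\frac{2}{27}(n-1)^3 + \Oh(n^2)} = p^{\frac{2}{27}n^3 + \Oh(n^2)}$ isomorphism classes of commutative nilpotent $\mathbb{F}_p$-algebras of cube zero and dimension $n-1$, and I would restrict attention to these.

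The crux of the argument, and the step I expect to require the most care, is checking that distinct $A$ yield distinct $A^+$. Here I would recover $A$ as the Jacobson radical of $A^+$: since $A^+/A\cong\mathbb{F}_p$ is a field, hence semisimple, we have $J(A^+)\subseteq A$; and since $A$ is nilpotent it is a nil ideal, so $A\subseteq J(A^+)$. Thus $J(A^+)=A$. Because the Jacobson radical is an isomorphism invariant, and because any ring isomorphism between $\mathbb{F}_p$-algebras is automatically $\mathbb{F}_p$-linear (the prime field $\mathbb{F}_p$ acts by repeated addition), an isomorphism $A^+\cong B^+$ of rings restricts to a ring isomorphism $A\cong B$, which is then an isomorphism of $\mathbb{F}_p$-algebras. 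Hence distinct $\mathbb{F}_p$-algebra isomorphism classes of $A$ give distinct ring isomorphism classes of $A^+$.

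Consequently the number of isomorphism classes of commutative rings with identity of order $p^n$ is at least $p^{\frac{2}{27}n^3 + \Oh(n^2)}$. Combining this with the upper bound from Theorem~\ref{commutative main} yields the stated estimate $p^{\frac{2}{27}n^3 + \Oh(n^{5/2})}$, since both error terms are absorbed by $\Oh(n^{5/2})$. The only genuinely nontrivial ingredient is the injectivity of the unitalization map, which rests entirely on the identification $J(A^+)=A$; everything else is routine bookkeeping of dimensions and error terms.
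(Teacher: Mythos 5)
Your proof is correct and is essentially the argument the paper intends: the paper omits the proof of this theorem (remarking only that the lower-bound rings can be arranged to have an identity), and your unitalization $A^{+}=\mathbb{F}_p\cdot 1\oplus A$ of cube-zero commutative nilpotent algebras of dimension $n-1$, with injectivity via $J(A^{+})=A$, supplies exactly the missing details. The upper bound via Theorem~\ref{commutative main} and the absorption of both error terms into $\Oh(n^{5/2})$ are likewise as intended.
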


We omit the proofs of these theorems, since they are straightforward modifications of the proofs in Section~\ref{General case}. 

\appendix

\section*{Appendix: Some counterexamples}

In this appendix we provide counterexamples to Kruse and Price's proposed reduction to $\mathbb{F}_p$-algebras~\cite[Theorem~3.1]{K & P} , which also apply to the argument of Poonen~\cite[Lemma~11.1]{Poonen}. After explaining some details of the method used, Examples~1 to~4 below illustrate problems with the proof. The most serious problem is illustrated by Example~2 below: A process that is claimed to transform a ring $R$ of cardinality $p^n$ into an $n$-dimensional $\mathbb{F}_p$-algebra does not, in fact,  always produce an associative object. Since this process produces objects that are not $\mathbb{F}_p$-algebras, we do not see how to use it to reduce the enumeration of rings to the enumeration of $\mathbb{F}_p$-algebras. So we are forced to produce a proof that avoids this reduction.

\subsection*{The reduction to $\mathbb{F}_p$-algebras}

Theorem 3.1 of \cite{K & P} states that if the number of pairwise non-isomorphic $p$-algebras ($\mathbb{F}_p$-algebras in our terminology) of dimension $n$ is $p^{\alpha}$, then the number of pairwise non-isomorphic rings of order $p^{n}$ is less than $p^{\alpha + n^{2} + n}$. The proof begins as follows. Let $R$ be a ring of order $p^{n}$. Let $x_{1}, \ldots, x_{m}$ be a basis for the additive group of $R$, 
with char $x_{i} = p^{k_{i}}$ for $1 \leq i \leq m$, so that $\sum_{i=1}^{m}k_{i} = n$. Next, for $1 \leq i \leq m$ and $0 \leq j < k_{i}$, define $y_{ij} = p^{j}x_{i}$. (Here, as in \cite{K & P book} and \cite{Poonen}, minor misprints in \cite{K & P} have been corrected.) Rename the $y_{ij}$ in any convenient (predetermined) ordering as $z_{1}, \ldots, z_{n}$. Then there are integers $\phi_{ijk}$ for $1 \leq i, j, k \leq n$, such that $0 \leq \phi_{ijk} < p$ and
\begin{displaymath}
z_{i}z_{j} = \sum_{k=1}^{n}\phi_{ijk}z_{k}.
\end{displaymath}
Moreover, the ring $R$ is determined up to isomorphism by the structure constants $\phi_{ijk}$. \par Now define an $\mathbb{F}_p$-algebra $A$ with a basis $e_{1}, \ldots, e_{n}$ by setting
\begin{displaymath}
e_{i}e_{j} = \sum_{k=1}^{n}\phi_{ijk}e_{k}.
\end{displaymath}
Kruse and Price assert that ``By construction the multiplicative semigroups of $A$ and $R$ are isomorphic, so associativity of $A$ is equivalent to associativity of $R$''. This is not true, as Examples~1 and~2 below demonstrate. For completeness, we give (Example~3) a ring $R$ with two different bases that give rise to non-isomorphic algebras $A$, and (Example~4) two non-isomorphic rings that give rise to the isomorphic algebras when appropriate bases are chosen. These examples answer natural questions about this process of obtaining an algebra from a ring. Examples~3 and~4 are also counterexamples to statements in the proofs of~\cite[Theorem~3.1]{K & P} and~\cite[Lemma~11.1]{Poonen} (that are not ultimately problematic). \\

\subsection*{Four examples}

\noindent
\textbf{Example 1}. (To show that the multiplicative semigroups of $R$ and $A$ need not be isomorphic.) Let  $R = \mathbb{Z}/8\mathbb{Z}$, the ring of integers modulo 8, then the group of units of $R$ is the direct sum of two cyclic groups, each of order 2. On the other hand, it is easy to verify that $A \cong F_{2}[X]/(X^{3})$, whose group of units is cyclic of order 4, generated by $(1 + X) + (X^{3})$. \\

\noindent
\textbf{Example 2}. (To show that $A$ need not be associative.) Writing $\mathbb{Z}_9$ for $\mathbb{Z}/9\mathbb{Z}$, let $R$ be the ring $\mathbb{Z}_9[X]/I$ where $I=(X^2-3)$. The additive group of $R$ is the sum of two cyclic groups, each of order $9$. A natural basis of $R$ is $1+I,X+I$, but we instead choose the basis $x_1,x_2$, where $x_1=5+X+I$ and $x_2=2+2X+I$. 
\par Now take $z_{1} = x_{1}, z_{2} = x_{2}, z_{3} = px_{1} = 3x_{1}$ and $z_{4} = px_{2} = 3x_{2}$. Then
\begin{align*}
z_{1}^{2} & =  1+X+I =  2z_{2} + z_{4}, \\
z_{1}z_{2} = z_2z_1 &=7+3X +I =  z_{1} + z_{2},\\
z_{2}^{2} & =  7+8X +I= 2z_{1} + z_{4}, \text{ and}\\
z_{2}z_{4}  = z_{4}z_2  &= 3+6X+I= 2z_{3}.
\end{align*}
So our $\mathbb{F}_3$-algebra $A$ has a basis $e_1,e_2,e_3,e_4$ such that
\begin{align*}
e_{1}^{2} &  =2e_{2} + e_{4}, \\
e_{1}e_{2} = e_2e_1 &=  e_{1} + e_{2},\\
e_{2}^{2} & = 2e_{1} + e_{4}, \text{ and}\\
e_{2}e_{4}  = e_{4}e_2  &= 2e_{3}.
\end{align*}
We calculate
\[
(e_{1}e_1) e_{2} = (2e_{2} + e_{4})e_{2} = 2e_{2}^{2} + e_{4}e_{2} = 2(2e_{1} + e_{4}) + 2e_{3}
=  e_{1} + 2e_{3} + 2e_{4},
\]
and also
\[
e_{1}(e_{1}e_{2})= e_{1}(e_{1} + e_{2}) = e_{1}^{2} + e_{1}e_{2} = (2e_{2} + e_{4}) + (e_{1} + e_{2})
= e_{1} + e_{4}.
\]
Hence $(e_{1}e_1) e_{2}\neq e_{1}(e_{1}e_{2})$, and so $A$ is not associative. \\

\noindent
\textbf{Example 3}. (To show that choosing different bases can produce non-isomorphic algebras $A$.) If we choose the natural basis $1+I,X+I$ for the ring $R$ in the example above, it is not hard to show that $A \cong \mathbb{F}_{3}[X]/(X^{4})$. This is very different from the non-associative algebra constructed in our previous example. \\

\noindent
\textbf{Example 4}. (To show that two non-isomorphic rings $R$ and $R_1$ can give rise to the same algebra $A$.) Define $R$ as in Examples~2 and~3. Define $R_1=\mathbb{Z}_9[X]/(X^2-6)$. It is not hard to check that the basis $1+(X^2-6),X+(X^2-6)$ of $R_1$ gives rise to the $\mathbb{F}_p$-algebra $A\cong \mathbb{F}_{3}[X]/(X^{4})$ just as in Example~3. But the rings $R_1$ and $R$ are not isomorphic: writing $e_1$ and $e$ for their respective identities, a short calculation shows that $R$ contains no element $y$ such that $y^{2} = 6e$, but $R_1$ contains the element $y=X+(X^2-6)$ with the property that $y^2=6e_1$.

\paragraph{Acknowledgement} We would like to thank the referee for their careful reading of the paper, and their insightful and helpful comments, which significantly improved our exposition.

\end{document}